\newtheorem{Theorem}{Theorem}
\newtheorem{Proposition}{Proposition}
\newtheorem{Corollary}{Corollary}
\newtheorem{Lemma}{Lemma}
\title{{\fontsize{25pt}{40pt}\selectfont Considering a Classical Upper Bound \\ on the Frobenius Number}}
    \author{} 
    \date{} 
\begin{document}
\maketitle
\vspace{-14mm}
\begin{center}
    {$^{1}${\fontsize{16pt}{40pt}\selectfont Aled Williams \quad Daiki Haijima}} \\  \vspace{4.0mm}
    $^{1}$Department of Mathematics \\
    London School of Economics and Political Science \\
    London, UK \\
    $^{1}$\texttt{a.e.williams1@lse.ac.uk} \\
    \vspace{5mm} 
\end{center}



\begin{abstract}
In this paper we study the (classical) Frobenius problem, namely the problem of finding the largest integer that cannot be represented as a nonnegative integral combination of given relatively prime (strictly) positive integers (known as the Frobenius number). The main contribution of this paper are observations regarding a previously known upper bound on the Frobenius number where, in particular, we observe that a previously presented argument features a subtle error, which alters the value of the upper bound. Despite this, we demonstrate that the subtle error does not impact upon on the validity of the upper bound, although it does impact on the upper bounds tightness. Notably, we formally state the corrected result and additionally compare the relative tightness of the corrected upper bound with the original. In particular, we show that the updated bound is tighter in all but only a relatively \enquote{small} number of cases using both formal techniques and via Monte Carlo simulation techniques. 

\vspace{1.0mm}
\noindent \textbf{Keywords}: Frobenius problem, Frobenius number, Diophantine equations, knapsack problems, knapsack polytopes, integer programming. 

\vspace{1.0mm}
\noindent \textbf{MSC codes}: 11D04, 90C10, 11D45.
\end{abstract}


\section{Introduction}
Let $\boldsymbol{a}$ be a positive integer $n$-dimensional primitive vector, i.e. $\boldsymbol{a} = (a_1, \ldots, a_n)^T \in \mathbb{Z}^n_{>0}$ with $\gcd(\boldsymbol{a}) := \gcd(a_1, \ldots, a_n) =1$. In what follows, we exclude the case $n=1$ and assume that the dimension $n \ge 2$. In particular, without loss of generality, we assume the following conditions:
\begin{equation} \label{conditions on a}
\boldsymbol{a} = (a_1, \ldots, a_n)^T \in \mathbb{Z}^n_{>0}\,, n \ge 2 \text{ and } \gcd(\boldsymbol{a}):= \gcd(a_1, \ldots, a_n) = 1.
\end{equation}
The \textit{Frobenius number} of $\boldsymbol{a}$, denoted by $F(\boldsymbol{a})$, is the largest integer that cannot be represented as a nonnegative integer combination of the $a_i$'s, i.e.
$$
F(\boldsymbol{a}) := \max \left\{ b \in \mathbb{Z} : b \ne \boldsymbol{a}^T \boldsymbol{z} \text{ for all } \boldsymbol{z} \in \mathbb{Z}^n_{\ge 0} \right\} ,
$$
where $\boldsymbol{a}^T$ denotes the transpose of $\boldsymbol{a}$. It should be noted for completeness that the Frobenius problem, namely the problem of finding the Frobenius number, is also known by other names within the literature including the money-changing problem (or the money-changing problem of Frobenius, or the coin-exchange problem of Frobenius) \cite{wilf1978circle, tripathi2003variation, bocker2005money}, the coin problem (or the Frobenius coin problem) \cite{boju2007math, spivey2007quadratic} and the Diophantine problem of Frobenius \cite{selmer1977linear, rodseth1978linear}. From a geometric viewpoint, $F(\boldsymbol{a})$ is the maximal right-hand side $b \in \mathbb{Z}$ such that the \textit{knapsack polytope} 
$$
P(\boldsymbol{a},b) = \left\{ \boldsymbol{x} \in \mathbb{R}^n_{\ge 0} : \boldsymbol{a}^T \boldsymbol{x} = b
\right\}
$$
does not contain integer points. It should be noted that \eqref{conditions on a} is indeed a necessary and sufficient condition for the existence of the Frobenius number.


Note that instead of the conditions \eqref{conditions on a}, some authors instead assume the stronger condition that all the entries of the vector are pairwise coprime, i.e. 
\begin{equation} \label{stronger conditions on a}
\boldsymbol{a} = (a_1, \ldots, a_n)^T \in \mathbb{Z}^n_{>0}\,, n \ge 2 \text{ and } \gcd(a_i, a_j) = 1 \text{ for any } i,j \in \{1,2,\ldots, n\} \text{ with } i \ne j.
\end{equation}
It should be noted that not all integer vectors satisfying \eqref{conditions on a} also satisfy the stronger conditions \eqref{stronger conditions on a}. The vector $\boldsymbol{a} = (6,10,15)^T$ for example satisfies $\gcd(6,10,15) = 1$, but not the pairwise coprime condition, since $\gcd(6,10) \ne 1$.


There is a very rich history on Frobenius numbers and the book \cite{alfonsin2005diophantine} provides a very good survey of the problem. It is worth noting that computing the Frobenius number in general is $\mathcal{NP}$-hard \cite{ramirez1996complexity} (which was proved via a reduction to the integer knapsack problem), however, if the number of integers $n$ is fixed, then a polynomial time algorithm to calculate $F(\boldsymbol{a})$ exists \cite{kannan1992lattice}. If $n=2$, it is well-known (most likely due to Sylvester \cite{sylvester1884problem}) that 
\begin{equation} \label{Sylvester 2nd Frobenius bound}
\begin{aligned}
F(a_1, a_2) &= a_1 a_2 - a_1 - a_2 \\
&= (a_1 - 1) (a_2 - 1) - 1\,.
\end{aligned}
\end{equation}

In contrast to the case when $n=2$, it was shown by Curtis \cite{curtis1990formulas} that no closed formula exists for the Frobenius number if $n>2$. In light of this, there has been a great deal of research into producing upper bounds on $F(\boldsymbol{a})$. These bounds share the property that in the worst-case they are of quadratic order with respect to the maximum absolute valued entry of $\boldsymbol{a}$, which will be denoted by $\| \boldsymbol{a} \|_{\infty}$. Further, let $\| \cdot \|_{2}$ denote the Euclidean norm.
In particular, upon assuming that $a_1 \le a_2 \le \cdots \le a_n$ holds, such bounds include the classical bound by Erd{\H o}s and Graham \cite[Theorem 1]{erdos1972linear}
\begin{equation*}
F(\boldsymbol{a}) \le 2 a_{n-1} \left\lfloor \frac{a_n}{n} \right\rfloor - a_n ,
\end{equation*}
by Selmer \cite{selmer1977linear}
\begin{equation*}
F(\boldsymbol{a}) \le 2 a_n \left\lfloor \frac{a_1}{n} \right\rfloor - a_1 ,
\end{equation*}
by Vitek \cite[Theorem 5]{vitek1975bounds}
\begin{equation*}
F(\boldsymbol{a}) \le \frac{1}{2}(a_2 - 1) (a_n - 2) - 1,
\end{equation*}
by Beck et al. \cite[Theorem 9]{beck2002frobenius}
\begin{equation*}
F(\boldsymbol{a}) \le \frac{1}{2}\left(\sqrt{a_1 a_2 a_3\left(a_1+a_2+a_3\right)}-a_1-a_2-a_3\right),
\end{equation*}
and by Fukshansky and Robins \cite[Equation 29]{fukshansky2007frobenius}
\begin{equation*}
F(\boldsymbol{a}) \le \left\lfloor \frac{(n-1)^{2} \,\Gamma (\frac{n+1}{2})}
{\pi^{(n-1)/ 2}} \sum_{i=1}^{n} a_{i} \sqrt{\|\boldsymbol{a}\|_{2}^{2}-a_{i}^{2}}+1 \right\rfloor ,
\end{equation*}
where $\Gamma (\cdot)$ and $\lfloor \, \cdot \, \rfloor$ denote Euler's gamma and the standard floor functions, respectively. 

It is worth noting that providing accurate upper bounds in the general setting, namely without additional assumptions on the vector $\boldsymbol{a}$, is not the only direction of research. In particular, there has been results on lower bounds for $F(\boldsymbol{a})$ (e.g. \cite{davison1994linear, killingbergtro2000frobenius, aliev2007frobenius}),  some explicit formulas provided in special cases 
(e.g. \cite{roberts1957diophantine, brauer1962frobenius, selmer1977linear, tripathi2017frobenius, komatsu2022frobenius, komatsu2023frobenius, roblesperez2018frobenius, tripathi2016frobenius, liu2024frobenius}) 
and algorithms for computing the Frobenius number 
(e.g. \cite{wilf1978circle, kannan1992lattice, nijenhuis1979money,  greenberg1980frobenius, beihoffer2005frobenius, johnson1960diophantine, selmer1978frobenius, morales2024frobenius}). 

Building on these directions of research, this paper is motivated by the need for accurate and reliable upper bounds on the Frobenius number, particularly in settings with minimal assumptions on the input vector $\boldsymbol{a}$. Such bounds play a crucial role in understanding the complexity of the Frobenius problem and its connections to optimization problems such as the knapsack (see e.g. \cite[Chapter 16]{cormen2009introduction}) and subset-sum (see e.g. \cite[Chapter 35]{cormen2009introduction}) problems.

This paper primarily focuses on refining a well-known upper bound, originally proposed by Beck et al. \cite[Theorem 9]{beck2002frobenius}. While this bound has been cited in the literature, we identify a subtle error in its derivation that, while not invalidating the bound itself, affects its tightness. Our primary contribution is the formal correction of this result, alongside a rigorous comparison of the relative tightness of the corrected and original bounds. Through theoretical analysis and Monte Carlo simulations, we demonstrate that the corrected bound is tighter in ``{nearly all}'' cases. This work not only addresses a key issue in the existing literature but also enhances our understanding of the structure of upper bounds under general settings, paving the way for further advancements in this area of research.


\section{Preliminary and Auxiliary Results}
In this section, we present some preliminary results that are essential for establishing a requirement for upper bounds on $F(\boldsymbol{a})$ when the more general conditions \eqref{conditions on a} on $\boldsymbol{a}$ hold. In particular, this requirement is induced via a simple lower bound which considers the parity of the $a_i$'s and is formally introduced below. 

\begin{Proposition} \label{Proposition_ODD_Exists}
If an integer vector $\boldsymbol{a}$ satisfies \eqref{conditions on a}, then at least one $a_i$ must be odd for $i \in \{1,2,\ldots, n\}$.
\end{Proposition}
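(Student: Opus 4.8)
The plan is to argue by contraposition (equivalently, by contradiction). The statement I want to establish is that the primitivity condition $\gcd(\boldsymbol{a}) = 1$ forces at least one entry to be odd, so the natural strategy is to suppose the opposite of the conclusion and derive a violation of the hypothesis. Concretely, I would assume that \emph{every} entry $a_i$ is even and show that this forces $\gcd(\boldsymbol{a}) \ge 2$, contradicting $\gcd(\boldsymbol{a}) = 1$.

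First I would suppose, for contradiction, that $a_i$ is even for all $i \in \{1, 2, \ldots, n\}$. Then by definition each $a_i$ can be written as $a_i = 2 k_i$ for some $k_i \in \mathbb{Z}_{> 0}$, so that $2 \mid a_i$ for every index $i$. The key elementary step is the observation that if a fixed integer (here $2$) divides every member of a finite collection of integers, then it divides their greatest common divisor; this is simply the universal property characterising the gcd as the largest common divisor. Applying this, I conclude $2 \mid \gcd(a_1, \ldots, a_n)$, whence $\gcd(\boldsymbol{a}) \ge 2$.

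This directly contradicts the assumed condition $\gcd(\boldsymbol{a}) = 1$ from \eqref{conditions on a}. Therefore the supposition that all entries are even is untenable, and at least one $a_i$ must be odd, completing the argument.

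I do not anticipate any genuine obstacle here: the result is a one-line consequence of the divisibility behaviour of the gcd, and the only point requiring any care is to invoke the standard fact that a common divisor of all $a_i$ divides their gcd (rather than merely asserting it). The dimension hypothesis $n \ge 2$ and the positivity of the entries play no essential role in the argument beyond ensuring the gcd is well defined and the setting is nondegenerate.
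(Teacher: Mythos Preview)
Your proposal is correct and matches the paper's own proof essentially line for line: both argue by contradiction, assume all $a_i$ are even, and conclude $\gcd(\boldsymbol{a}) \ge 2$. Your version is simply a bit more explicit about why a common divisor of all the $a_i$ must divide their gcd, but the underlying argument is identical.
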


\begin{proof}
Suppose for contradiction there does not exist an odd $a_i$, i.e. that $\boldsymbol{a}$ has only even elements. It follows immediately that $\gcd(\boldsymbol{a}) \ge 2$, which contradicts the assumed conditions \eqref{conditions on a} as required. 
\end{proof}

Denote by $o_t := o_t (\boldsymbol{a})$ the $t$-th smallest odd element in $\boldsymbol{a}$. Observe that Proposition \ref{Proposition_ODD_Exists} implies that $o_1$ necessarily exists for any integer vector $\boldsymbol{a}$ satisfying \eqref{conditions on a}.

\begin{Proposition} \label{Proposition_Lower_Bound}
If an integer vector $\boldsymbol{a}$ satisfies \eqref{conditions on a}, then $o_1 - 2$ is a lower bound for $F(\boldsymbol{a})$.
\end{Proposition}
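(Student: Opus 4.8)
The plan is to establish the bound by exhibiting a single non-representable integer that is at least $o_1 - 2$; since $F(\boldsymbol{a})$ is by definition the largest non-representable integer, producing any non-representable $b \ge o_1 - 2$ immediately yields $F(\boldsymbol{a}) \ge o_1 - 2$. The natural candidate is $b = o_1 - 2$ itself, and I would argue that it cannot be written as a nonnegative integer combination $\sum_{i=1}^n z_i a_i$.

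The key observation is a parity argument. By Proposition \ref{Proposition_ODD_Exists} the value $o_1$ exists and is odd, so $o_1 - 2$ is also odd; moreover, by the definition of $o_1$ as the smallest odd entry, every odd $a_i$ satisfies $a_i \ge o_1$. First I would suppose, for contradiction, that $o_1 - 2 = \sum_{i=1}^n z_i a_i$ for some $z_i \in \mathbb{Z}_{\ge 0}$. Reducing this identity modulo $2$, the even entries contribute nothing, so the parity of the right-hand side equals the parity of $\sum_{i : a_i \text{ odd}} z_i$. Since the left-hand side $o_1 - 2$ is odd, this forces $\sum_{i : a_i \text{ odd}} z_i$ to be odd, and in particular at least one odd entry $a_j$ must appear with a strictly positive coefficient $z_j \ge 1$.

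I would then close the contradiction by a size comparison: any such odd entry satisfies $a_j \ge o_1$, whence $\sum_{i=1}^n z_i a_i \ge z_j a_j \ge a_j \ge o_1 > o_1 - 2$. This contradicts the assumed equality, so $o_1 - 2$ admits no nonnegative integer representation and the bound follows. The argument also covers the degenerate situation $o_1 = 1$, where $o_1 - 2 = -1$ is trivially non-representable by a nonnegative combination.

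I do not anticipate a genuine obstacle here, as the statement reduces to this clean parity-plus-minimality argument; the only point requiring care is justifying that any representation must use at least one odd generator, which is exactly what the reduction modulo $2$ supplies. The argument relies only on the existence of an odd entry guaranteed by Proposition \ref{Proposition_ODD_Exists} together with the minimality built into the definition of $o_1$, so no hypotheses beyond \eqref{conditions on a} are required.
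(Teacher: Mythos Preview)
Your proof is correct and follows essentially the same approach as the paper: both argue that the odd integer $o_1 - 2$ cannot be represented because any nonnegative combination yielding an odd value must involve an odd generator, all of which are at least $o_1$. The paper states this more tersely (asserting without elaboration that any odd number below $o_1$ is non-representable), whereas you spell out the mod-$2$ reduction and the size comparison explicitly and also handle the degenerate case $o_1 = 1$; your version is simply a more detailed execution of the same idea.
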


\begin{proof}
Firstly, observe that since $o_1$ is the smallest odd element in $\boldsymbol{a}$, it follows that any odd number strictly less than $o_1$ cannot be expressed as $\sum_{i=1}^n a_i x_i$ for $x_i \in \mathbb{Z}_{\ge 0}$ for $i \in \{1,2,\ldots,n\}$. In particular, $o_1 - 2$ cannot be expressed as a nonnegative integer linear combination of the $a_i$'s. Thus, the Frobenius number $F(\boldsymbol{a})$ is at least $o_1 - 2$ as required. 
\end{proof}

The propositions outlined can be applied to establish a requirement for upper bounds on the Frobenius number, particularly where the weaker conditions \eqref{conditions on a} concerning the vector $\boldsymbol{a}$ are met.

\begin{Lemma} \label{dependency_on_maximum}
If an integer vector $\boldsymbol{a} = (a_1, a_2, \ldots, a_n)^T$ satisfies \eqref{conditions on a} with $a_1 \le a_2 \le \cdots \le a_n$, then any general upper bound on the Frobenius number $F(\boldsymbol{a})$ must inherently depend on the largest element $a_n$. 
\end{Lemma}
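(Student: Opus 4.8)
The plan is to argue by contradiction: I will suppose that some general upper bound on $F(\boldsymbol{a})$ exists which does not depend on the largest entry $a_n$, and then exhibit a family of admissible vectors on which this supposition becomes untenable. The key idea is to freeze the first $n-1$ coordinates and let only $a_n$ vary, so that a bound independent of $a_n$ is forced to take a single fixed value across the whole family, while the Frobenius number itself can be driven to infinity. The lower bound of Proposition \ref{Proposition_Lower_Bound} is exactly the tool that supplies this growth.

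Concretely, first I would construct the family. Take $a_1 = a_2 = \cdots = a_{n-1} = 2$ and let $a_n$ range over the odd integers with $a_n \ge 3$. Each such vector satisfies the ordering $a_1 \le \cdots \le a_n$, and since $a_n$ is odd we have $\gcd(2, a_n) = 1$, so $\gcd(\boldsymbol{a}) = 1$; hence \eqref{conditions on a} holds and $F(\boldsymbol{a})$ is well defined. For these vectors $a_n$ is the unique odd entry, so it is in particular the smallest odd element, i.e. $o_1 = a_n$. Proposition \ref{Proposition_Lower_Bound} then gives $F(\boldsymbol{a}) \ge o_1 - 2 = a_n - 2$.

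The conclusion then follows. Any general upper bound $U$ that is independent of $a_n$ can be written as a quantity $U = U(a_1, \ldots, a_{n-1})$ depending only on the frozen coordinates, and is therefore constant along the entire family above. But validity of the bound requires $U \ge F(\boldsymbol{a}) \ge a_n - 2$ for every admissible odd $a_n \ge 3$, and letting $a_n \to \infty$ forces the right-hand side to grow without bound, contradicting the constancy of $U$. Hence no valid general upper bound can omit $a_n$, which is the assertion.

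The main obstacle — really the only delicate point — is to make precise what ``depends on $a_n$'' should mean and to ensure that the constructed vectors are genuinely admissible for infinitely many arbitrarily large $a_n$ while all the other entries stay fixed. The choice $a_1 = \cdots = a_{n-1} = 2$ handles both at once, since it keeps primitivity automatic (via the single odd entry) and isolates $a_n$ as the only varying coordinate, so that independence from $a_n$ really does mean constancy along the family. I expect the remaining steps to be routine once this family and the lower bound of Proposition \ref{Proposition_Lower_Bound} are in place.
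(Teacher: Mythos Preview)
Your proposal is correct and follows essentially the same approach as the paper: both argue by contradiction, freeze the first $n-1$ entries as even numbers so that $o_1 = a_n$, invoke Proposition~\ref{Proposition_Lower_Bound} to obtain $F(\boldsymbol{a}) \ge a_n - 2$, and then derive a contradiction by letting $a_n$ grow. The only cosmetic differences are that you take the specific choice $a_1 = \cdots = a_{n-1} = 2$ (the paper leaves these as arbitrary even integers) and that you phrase the contradiction via $a_n \to \infty$ rather than picking a single explicit odd $a_n$ exceeding the putative bound.
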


\begin{proof}
Let us suppose for simplicity that the vector $\boldsymbol{a}$ has the form that $a_i$ is even for each $i \in \{1,2,\ldots,n-1\}$ while the final entry $a_n$ is odd. Notice that here $o_1 = a_n$ and, in light of Proposition \ref{Proposition_Lower_Bound}, it immediately follows that $F(\boldsymbol{a}) \ge a_n -2$. 

Suppose for contradiction that there exists an upper bound on $F(\boldsymbol{a})$ that does not depend on $a_n$, i.e. that there exists some function $f:\mathbb{R}^{n-1} \rightarrow \mathbb{R}$ satisfying $F(\boldsymbol{a}) \le f(a_1, a_2, \ldots, a_{n-1})$. If we set $a_n = f(a_1, a_2, \ldots, a_{n-1}) + 3$ if $f(a_1, a_2, \ldots, a_{n-1})$ is even and $a_n = f(a_1, a_2, \ldots, a_{n-1}) + 4$ if $f(a_1, a_2, \ldots, a_{n-1})$ is odd, then observe that $a_n -2 > f(a_1, a_2, \ldots, a_{n-1})$ holds. In particular, notice that the lower bound on the Frobenius number is strictly larger than the (assumed) upper bound, which is a contradiction as required. 
\end{proof}

It should be emphasised that this result suggests that any (general) upper bound on the Frobenius number which does not depend on the maximal entry of $\boldsymbol{a}$ does not necessarily hold in general without stronger assumptions than the conditions \eqref{conditions on a}.

The following results provide a rather surprisingly useful property that holds when $\boldsymbol{a}$ satisfies the stronger conditions \eqref{stronger conditions on a} that all the entries of the vector are pairwise coprime.

\begin{Lemma} \label{Upper_Bound_from_Sylvester}
If an integer vector $\boldsymbol{a} = (a_1, a_2, \ldots, a_n)^T$ satisfies \eqref{stronger conditions on a}, then for any $i,j \in \{1,2,\ldots,n\}$ with $i \ne j$ we have 
$$
F(\boldsymbol{a}) \le (a_i - 1)(a_j - 1) - 1.
$$
\end{Lemma}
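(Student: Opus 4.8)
The plan is to reduce the $n$-variable problem to the classical two-variable case and then invoke Sylvester's formula \eqref{Sylvester 2nd Frobenius bound}. The guiding observation is that the Frobenius number is \emph{monotone} with respect to enlarging the generating set: introducing further generators can only increase the collection of representable integers, and hence can only decrease (or leave unchanged) the largest non-representable integer. In particular, restricting attention to just the two coordinates $a_i$ and $a_j$ should yield an auxiliary Frobenius number that dominates $F(\boldsymbol{a})$.

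Concretely, first I would fix $i \neq j$ and consider the sub-vector $(a_i, a_j)$. Any integer $b$ that is representable as $b = a_i z_i + a_j z_j$ with $z_i, z_j \in \mathbb{Z}_{\ge 0}$ is \emph{a fortiori} representable as a nonnegative integer combination of the full vector $\boldsymbol{a}$, simply by setting every remaining coefficient to zero. Consequently the set of integers \emph{not} representable by $\boldsymbol{a}$ is contained in the set of integers not representable by $(a_i, a_j)$, and taking maxima over these sets gives $F(\boldsymbol{a}) \le F(a_i, a_j)$. The second step is then to evaluate $F(a_i, a_j)$ via Sylvester's formula \eqref{Sylvester 2nd Frobenius bound}, which yields $F(a_i, a_j) = (a_i - 1)(a_j - 1) - 1$, completing the chain of inequalities and delivering the claimed bound.

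The only point requiring genuine care, and what I expect to be the main obstacle, is the applicability of Sylvester's formula: it demands that the pair $(a_i, a_j)$ itself be coprime so that its two-variable Frobenius number is well defined and equals $(a_i-1)(a_j-1)-1$. This is precisely where the stronger hypothesis \eqref{stronger conditions on a} is indispensable, since pairwise coprimality guarantees $\gcd(a_i, a_j) = 1$ for \emph{every} choice of $i \neq j$, whereas the weaker condition \eqref{conditions on a} only ensures $\gcd(\boldsymbol{a}) = 1$ and would in general permit pairs sharing a common factor (for instance the pair $(6,10)$ in the example $\boldsymbol{a} = (6,10,15)^T$ discussed earlier), for which no two-variable Frobenius number exists. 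I would therefore record explicitly that \eqref{stronger conditions on a} implies $\gcd(a_i,a_j)=1$, after which the monotonicity step and Sylvester's formula combine at once to yield $F(\boldsymbol{a}) \le (a_i-1)(a_j-1)-1$ for all admissible $i \neq j$.
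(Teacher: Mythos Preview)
Your proposal is correct and follows essentially the same approach as the paper's own proof: both observe that $\gcd(a_i,a_j)=1$ under \eqref{stronger conditions on a}, invoke Sylvester's formula \eqref{Sylvester 2nd Frobenius bound} for the pair, and use the fact that any nonnegative combination of $a_i,a_j$ extends to one of the full vector by padding with zeros, yielding $F(\boldsymbol{a}) \le F(a_i,a_j) = (a_i-1)(a_j-1)-1$. Your framing in terms of monotonicity of the Frobenius number under enlargement of the generating set is a clean way to phrase exactly the containment argument the paper spells out.
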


\begin{proof}
Firstly, notice that given any pair $a_i$ and $a_j$ with $i \ne j$ that in light of the conditions \eqref{stronger conditions on a} it follows that $\gcd(a_i, a_j) = 1$. Thus, the Frobenius number $F(a_i, a_j)$ corresponding to the pair $a_i$ and $a_j$ exists and takes finite value. Furthermore, it follows in light of \eqref{Sylvester 2nd Frobenius bound} that the equality 
$$
F(a_i, a_j) = (a_i - 1) (a_j -1) - 1
$$ 
holds. By the definition of Frobenius number, we deduce that all integers strictly greater than $(a_i - 1) (a_j -1) - 1$ can be expressed as $a_i x_i + a_j x_j$ for some $x_i, x_j \in \mathbb{Z}_{\ge 0}$. Thus, it immediately follows that all integers strictly greater than $(a_i - 1) (a_j -1) - 1$ can be expressed as $\sum_{k=1}^n a_k x_k$ for $x_k \in \mathbb{Z}_{\ge 0}$ for $k \in \{1,2,\ldots,n\}$ (upon setting $x_k = 0$ when $k \ne i, j$ whenever necessary). In particular, this shows that the Frobenius number $F(\boldsymbol{a})$ satisfies the inequality $F(\boldsymbol{a}) \le (a_i - 1)(a_j - 1) - 1$ for any $i \ne j$ as required. 
\end{proof}

The following corollary follows immediately from Lemma \ref{Upper_Bound_from_Sylvester}. 

\begin{Corollary} \label{Improved_Upper_Bound_from_Sylvester}
If an integer vector $\boldsymbol{a} = (a_1, a_2, \ldots, a_n)^T$ satisfies \eqref{stronger conditions on a} and $a_1 \le a_2 \le \cdots \le a_n$, then 
$$
F(\boldsymbol{a}) \le (a_1 - 1)(a_2 - 1) - 1.
$$
\end{Corollary}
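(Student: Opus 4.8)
The plan is to invoke Lemma~\ref{Upper_Bound_from_Sylvester} directly and then specialize the index pair $(i,j)$ to the two smallest entries of $\boldsymbol{a}$. Since that lemma asserts $F(\boldsymbol{a}) \le (a_i - 1)(a_j - 1) - 1$ for \emph{every} choice of indices $i \ne j$, the corollary amounts to selecting the pair that makes the right-hand side as small as possible, thereby isolating the tightest bound within this family.

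First I would set $i = 1$ and $j = 2$, which is permissible since $n \ge 2$ guarantees that at least two distinct indices exist. Applying Lemma~\ref{Upper_Bound_from_Sylvester} with this choice immediately yields
$$
F(\boldsymbol{a}) \le (a_1 - 1)(a_2 - 1) - 1,
$$
which is precisely the claimed inequality.

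To justify that this is the sharpest member of the family, and hence the natural choice to record as a corollary, I would observe that the ordering $a_1 \le a_2 \le \cdots \le a_n$ together with positivity of the entries gives $a_k - 1 \ge 0$ for all $k$, and that the map $(x,y) \mapsto (x-1)(y-1)$ is nondecreasing in each coordinate on $[1,\infty)$. Consequently, among all admissible pairs the product $(a_i - 1)(a_j - 1)$ is minimized by taking the two smallest entries $a_1$ and $a_2$, so the bound obtained at $i=1,\, j=2$ is no weaker than every other instance produced by Lemma~\ref{Upper_Bound_from_Sylvester}.

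I do not anticipate any genuine obstacle: the statement is a direct specialization of the preceding lemma, and the only substantive remark is the monotonicity observation that identifies $(a_1, a_2)$ as the optimal pair. The one subtlety worth checking is that some entry could equal $1$; however, the pairwise coprimality in \eqref{stronger conditions on a} permits at most a single entry equal to $1$, and in any case the inequality $a_k - 1 \ge 0$ keeps the monotonicity argument valid, so the conclusion is unaffected.
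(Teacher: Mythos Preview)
Your proposal is correct and matches the paper's approach: the paper states only that the corollary follows immediately from Lemma~\ref{Upper_Bound_from_Sylvester}, and your specialization to $i=1$, $j=2$ is exactly that step. The additional monotonicity remark is sound, though your aside that pairwise coprimality permits at most one entry equal to $1$ is not quite right (since $\gcd(1,1)=1$)---but as you yourself note, this has no effect on the argument.
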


It should be emphasised that the above results tell us that the well-known result \eqref{Sylvester 2nd Frobenius bound} of Sylvester \cite{sylvester1884problem} extends naturally to provide an upper bound for the Frobenius number $F(\boldsymbol{a})$ under the (stronger) assumption \eqref{stronger conditions on a} that the entries of the vector $\boldsymbol{a}$ are pairwise coprime.


\section{Observations on a Previously Known Upper Bound}
Recall that Beck et al. \cite[Theorem 9]{beck2002frobenius} introduced the upper bound 
\begin{equation} \label{Beck Original Bound}
F(\boldsymbol{a}) \le \frac{1}{2}\left(\sqrt{a_1 a_2 a_3\left(a_1+a_2+a_3\right)}-a_1-a_2-a_3\right)
\end{equation}
on the Frobenius number upon finding bounds for Fourier-Dedekind sums. This bound \eqref{Beck Original Bound} is widely referenced across books and papers, however, in most of these little attention is given to the underlying assumed conditions on $\boldsymbol{a}$. In particular, the upper bound \eqref{Beck Original Bound} necessitates that the stronger conditions \eqref{stronger conditions on a} hold, instead of the more general (weaker) conditions \eqref{conditions on a}.

\begin{Proposition}
The upper bound \eqref{Beck Original Bound} of Beck et al. \cite[Theorem 9]{beck2002frobenius} does not necessarily hold unless the stronger conditions \eqref{stronger conditions on a} hold. This requirement remains even if the weaker conditions \eqref{conditions on a} are met.
\end{Proposition}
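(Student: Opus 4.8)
The statement asserts a \emph{failure} of the bound \eqref{Beck Original Bound} under the weaker hypotheses, so the natural strategy is to exhibit an explicit family of counterexamples: integer vectors satisfying \eqref{conditions on a} but violating the pairwise coprimality condition \eqref{stronger conditions on a}, for which the true Frobenius number strictly exceeds the right-hand side of \eqref{Beck Original Bound}. The most economical way to force a large Frobenius number is to invoke the parity lower bound of Proposition \ref{Proposition_Lower_Bound}, since it is precisely the presence of several even entries (which breaks pairwise coprimality) that makes the smallest odd element, and hence the lower bound $o_1 - 2$, large.

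Concretely, I would consider the family $\boldsymbol{a} = (2, 4, M)^T$ with $M$ an odd integer exceeding $4$. Here $\gcd(2,4,M) = 1$, so \eqref{conditions on a} holds, yet $\gcd(2,4) = 2$, so \eqref{stronger conditions on a} fails. Since $2$ and $4$ generate every even integer at least $2$, and adjoining the odd number $M$ first produces the odd value $M$, the smallest odd element is $o_1 = M$, and Proposition \ref{Proposition_Lower_Bound} yields $F(\boldsymbol{a}) \ge M - 2$ (in fact equality holds for this family, though the inequality alone suffices).

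It then remains to compare this lower bound with the value of \eqref{Beck Original Bound}, namely $\tfrac{1}{2}\bigl(\sqrt{8M(M+6)} - M - 6\bigr)$. The key observation is that the radical behaves like $2\sqrt{2}\,M$ for large $M$, so the right-hand side grows linearly in $M$ with leading coefficient $\tfrac{1}{2}(2\sqrt{2} - 1) \approx 0.914 < 1$, whereas the lower bound $M - 2$ grows with coefficient $1$. Hence for all sufficiently large odd $M$ the lower bound strictly exceeds \eqref{Beck Original Bound}, contradicting it. To make this rigorous I would reduce the desired inequality $M - 2 > \tfrac{1}{2}\bigl(\sqrt{8M(M+6)} - M - 6\bigr)$ to the polynomial inequality $M^2 - 36M + 4 > 0$ by isolating the radical and squaring; this holds for every odd $M \ge 37$, and the instance $\boldsymbol{a} = (2,4,37)^T$, with $F(\boldsymbol{a}) = 35$ against a bound of roughly $34.91$, already witnesses the failure.

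The only real subtlety, and the step requiring the most care, is the radical comparison: one must check that both sides are nonnegative before squaring and confirm the resulting quadratic threshold, since the margin for the smallest admissible $M$ is narrow. Establishing this polynomial inequality completes the argument and shows that pairwise coprimality cannot be relaxed to the mere primitivity condition \eqref{conditions on a}.
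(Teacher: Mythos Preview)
Your proof is correct and, like the paper's, proceeds by exhibiting explicit counterexamples that satisfy \eqref{conditions on a} but not \eqref{stronger conditions on a}. The two arguments differ in the choice of witness and in scope. For $n=3$ the paper simply records the single vector $(3,6,19)^T$, states that $F(3,6,19)=35$, and evaluates the bound as $\tfrac12(6\sqrt{266}-28)\approx 34.93$; you instead build the infinite family $(2,4,M)^T$, invoke Proposition~\ref{Proposition_Lower_Bound} to obtain $F\ge M-2$, and reduce the comparison to the quadratic $M^2-36M+4>0$. Your route is more self-contained (it does not require computing a Frobenius number directly) and produces infinitely many counterexamples rather than one. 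On the other hand, the paper goes on to treat every $n\ge 4$ as well, observing via Lemma~\ref{dependency_on_maximum} that no bound depending only on $a_1,a_2,a_3$ can be valid in general, and offering the concrete family $(2,4,6,a_4,\dots,a_n)^T$; your argument addresses only $n=3$, which already suffices for the proposition as stated but stops short of the paper's ``counterexample for every $n$'' claim.
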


\begin{proof}
Observe that if $n=2$, then the stronger \eqref{stronger conditions on a} and weaker conditions \eqref{conditions on a} are equivalent. Thus, we focus here only on the case that $n \ge 3$, where we show there are counterexamples for each $n$.

Let us consider two cases, namely $n=3$ and $n \ge 4$, respectively. If $n=3$, then consider the integer vector $\boldsymbol{a} = (3,6,19)^T$ with $F(\boldsymbol{a}) = 35$. In such case, notice that the bound \eqref{Beck Original Bound} yields
\begin{equation} \label{counter}
\frac{1}{2} \left( \sqrt{3 \cdot 6 \cdot 19 \, (3 + 6 + 19)} - 3 - 6 - 19 \right) = \frac{1}{2} \left( 6 \sqrt{266} - 28 \right) \approx 34.928519
\end{equation}
where, in particular, $35 \not\le 34.928519$ and hence the upper bound \eqref{Beck Original Bound} fails when $n = 3$. If instead $n \ge 4$, then in light of Lemma \ref{dependency_on_maximum} clearly \eqref{Beck Original Bound} cannot be a general upper bound for the Frobenius number.
\end{proof}

Note that in the case $n \ge 4$, any vector $\boldsymbol{a} = (2,4,6, a_4,\ldots,a_n)^T$ satisfying \eqref{conditions on a} and $a_n \ge \cdots \ge a_4 > 7$ provides a counterexample for any $n$. Indeed, since $a_4 > 7$ by assumption, the Frobenius number is clearly greater than or equal to 7 (since 7 cannot be expressed by $\sum_{i=1}^n a_i x_i$ for $x_i \in \mathbb{Z}_{\ge 0}$ for all $i$). Despite this, the bound \eqref{Beck Original Bound} yields 
\begin{equation} \label{counter2}
\frac{1}{2} \left( \sqrt{2 \cdot 4 \cdot 6 \, (2 + 4 + 6)} - 2 - 4 - 6 \right) = 6, 
\end{equation}
which demonstrates the upper bound \eqref{Beck Original Bound} does not necessarily hold if only the weaker conditions  \eqref{conditions on a} are met.

The remarks presented in this section are intended to clarify a common misunderstanding about the upper bound \eqref{Beck Original Bound} as referenced in various books and papers. Furthermore, it is crucial to highlight a subtle error in the argument presented by Beck et al. \cite{beck2002frobenius}, which alters the value of the upper bound. The following result states the corrected upper bound, where the proof is outlined in a later section.

\begin{Theorem} \label{Corrected_Beck_bound}
If an integer vector $\boldsymbol{a} = (a_1, a_2, \ldots, a_n)^T$ satisfies \eqref{stronger conditions on a} with $a_1 \le a_2 \le \cdots \le a_n$, then the argument of Beck et al. \cite{beck2002frobenius} yields
\begin{equation} \label{updated_Beck_bound_eq}
\begin{aligned}
F(\boldsymbol{a}) &\le \frac{1}{2}\bigg(\sqrt{\frac{1}{3}\left(a_1+a_2+a_3\right)\left(a_1+a_2+a_3+2 a_1 a_2 a_3\right)+\frac{8}{3}\left(a_1 a_2+a_2 a_3+a_3 a_1\right)} \\
&\hspace{9.5cm} -a_1-a_2-a_3\bigg).
\end{aligned}
\end{equation}
\end{Theorem}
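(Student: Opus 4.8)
The plan is to reconstruct the Fourier--Dedekind sum derivation of Beck et al. \cite{beck2002frobenius} faithfully, and to carry out only the final algebraic simplification correctly. Write $p_{\boldsymbol{a}}(t)$ for the number of $\boldsymbol{z}\in\mathbb{Z}^n_{\ge 0}$ with $\boldsymbol{a}^T\boldsymbol{z}=t$, so that $F(\boldsymbol{a})$ is the largest $t$ with $p_{\boldsymbol{a}}(t)=0$; for $n=3$ with pairwise coprime entries one has the exact partial-fraction (Fourier--Dedekind) identity $p_{\boldsymbol{a}}(t)=R(t)+S(t)$, where $R$ is the explicit quadratic coming from the triple pole at $z=1$ of $\big((1-z^{a_1})(1-z^{a_2})(1-z^{a_3})\big)^{-1}$, namely $R(t)=\tfrac{t^2}{2a_1a_2a_3}+\tfrac{(a_1+a_2+a_3)\,t}{2a_1a_2a_3}+c$ with a constant $c$ expressible through $a_1^2+a_2^2+a_3^2$ and $a_1a_2+a_2a_3+a_3a_1$, and $S(t)$ is the sum of the three Fourier--Dedekind sums, one for each modulus $a_i$. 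The whole strategy is to locate a threshold $t_0$ with $p_{\boldsymbol{a}}(t)>0$ for all $t>t_0$, whence $F(\boldsymbol{a})\le t_0$.

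First I would reproduce Beck et al.'s uniform bound $|S(t)|\le B$ on the Dedekind part. Each Fourier--Dedekind sum modulo $a_i$ is estimated by the triangle inequality followed by the Cauchy--Schwarz inequality, reducing matters to sums of the form $\sum_{k=1}^{a_i-1}|1-\zeta^{k}|^{-2}$ with $\zeta$ a primitive $a_i$-th root of unity; pairwise coprimality (from \eqref{stronger conditions on a}) lets one reindex so that each such sum equals $\tfrac{a_i^2-1}{12}$, via the classical evaluation $\sum_{k=1}^{a-1}\csc^2(\pi k/a)=\tfrac{a^2-1}{3}$. The key point here is to retain \emph{exactly} the bound $B$ that their argument produces, since the theorem asserts what \enquote{the argument of Beck et al. yields}, not the sharpest possible estimate.

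With $|S(t)|\le B$ in hand we have $p_{\boldsymbol{a}}(t)\ge R(t)-B$, so $R(t)>B$ already forces $p_{\boldsymbol{a}}(t)>0$. Solving the quadratic $R(t)=B$ and taking the larger root gives
\begin{equation*}
t_0=\frac{1}{2}\left(\sqrt{(a_1+a_2+a_3)^2+8a_1a_2a_3\,(B-c)}-(a_1+a_2+a_3)\right),
\end{equation*}
so that $F(\boldsymbol{a})\le t_0$. Substituting the explicit values of $B$ and $c$ and simplifying the radicand is where the correction enters: done correctly, the quantity $(a_1+a_2+a_3)^2+8a_1a_2a_3(B-c)$ collapses to $\tfrac{1}{3}(a_1+a_2+a_3)(a_1+a_2+a_3+2a_1a_2a_3)+\tfrac{8}{3}(a_1a_2+a_2a_3+a_3a_1)$, which is precisely the radicand in \eqref{updated_Beck_bound_eq}, whereas the published simplification collapses it (erroneously) to $a_1a_2a_3(a_1+a_2+a_3)$ as in \eqref{Beck Original Bound}.

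The main obstacle I anticipate is twofold. The routine-but-delicate part is the exact bookkeeping of the additive constant $c$ and of the Dedekind bound $B$ through the Cauchy--Schwarz step and the quadratic formula, since an error of even one lower-order term shifts the radicand; I would guard against this by sanity-checking $R$ on the degenerate instance $\boldsymbol{a}=(1,1,1)$, where $p_{\boldsymbol{a}}(t)=\binom{t+2}{2}$ and $S\equiv 0$ force $c=1$, and by testing the final bound numerically on small pairwise-coprime triples such as $(2,3,5)$. The genuinely substantive part is isolating the \emph{single} step in Beck et al.'s simplification where the radicand is mis-collapsed, so that one can legitimately claim their argument, corrected, yields \eqref{updated_Beck_bound_eq}; pinning down that step — rather than merely exhibiting some other valid bound — is what makes the result a faithful \enquote{correction} rather than a new theorem.
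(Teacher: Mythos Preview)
Your high-level plan --- write the counting function as a quadratic polynomial plus a periodic Fourier--Dedekind part, bound the periodic part uniformly, and solve the resulting quadratic for the threshold --- is exactly the skeleton the paper follows. The gap is in \emph{how} you bound the Fourier--Dedekind sums and, relatedly, \emph{where} you locate Beck et al.'s slip.

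You propose the triangle inequality plus Cauchy--Schwarz on the complex-exponential form, reducing to $\sum_{k=1}^{a_i-1}|1-\zeta^{k}|^{-2}=(a_i^{2}-1)/12$ and hence $|\sigma_t|\le (a_i^{2}-1)/(12a_i)=a_i/12-1/(12a_i)$. But Beck et al. (and the paper) instead pass to the sawtooth representation \eqref{useful_result} and apply Cauchy--Schwarz to the sawtooth factors, arriving at $\sigma_t(a,b;c)\ge -\sum_{m=0}^{c-1}((m/c))^{2}-\tfrac{1}{4c}$. The paper's whole point is that the \emph{correct} evaluation of this last expression is $-\tfrac{c}{12}-\tfrac{5}{12c}$, and \emph{this} is the step at which Beck et al. erred --- not a later ``radicand collapse'' as you suggest. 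The two Dedekind bounds differ by $\tfrac{1}{2a_i}$ per modulus; if you carry your version through, the radicand comes out smaller than the one in \eqref{updated_Beck_bound_eq} by exactly $4(a_1a_2+a_2a_3+a_3a_1)$, so you would prove a \emph{different} (in fact sharper) inequality, not the one the theorem attributes to Beck et al.'s argument. Since the statement is specifically about what their argument yields once the arithmetic is done correctly, you must track their sawtooth route and re-evaluate that one sum. (A secondary point: the paper, following Beck et al., works with the strictly positive counting function $p'_A$ and with $F^{*}(\boldsymbol{a})=F(\boldsymbol{a})+\sum_i a_i$, not with $p_A$ and $F$ directly; this flips the sign of the linear coefficient in your $R(t)$ but washes out after converting back to $F$.)
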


It is natural to consider if the original bound \eqref{Beck Original Bound} given by Beck et al. \cite[Theorem 9]{beck2002frobenius} is indeed correct provided the integer vector $\boldsymbol{a}$ satisfies the stronger conditions \eqref{stronger conditions on a}. It turns out that the upper bound \eqref{Beck Original Bound} remains valid. The following result states this formally, where the proof is outlined in a later section. 

\begin{Theorem} \label{Original_Corrected_Beck_bound} 
If an integer vector $\boldsymbol{a} = (a_1, a_2, \ldots, a_n)^T$ satisfies \eqref{stronger conditions on a} with $a_1 \le a_2 \le \cdots \le a_n$, then 
\begin{equation*}
F(\boldsymbol{a}) \le \frac{1}{2}\left(\sqrt{a_1 a_2 a_3\left(a_1+a_2+a_3\right)}-a_1-a_2-a_3\right).
\end{equation*}
\end{Theorem}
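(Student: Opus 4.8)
The plan is to derive Theorem~\ref{Original_Corrected_Beck_bound} from the corrected bound of Theorem~\ref{Corrected_Beck_bound} together with the Sylvester-type estimate of Corollary~\ref{Improved_Upper_Bound_from_Sylvester}. Both the original and the corrected bounds depend only on the three smallest entries $a_1,a_2,a_3$ and share the common outer form $\tfrac12\!\left(\sqrt{\,\cdot\,}-a_1-a_2-a_3\right)$, and the map $x\mapsto \tfrac12(\sqrt{x}-a_1-a_2-a_3)$ is increasing on $x\ge 0$. Hence the corrected bound is at most the original one precisely when its radicand is at most the original radicand. Writing $S=a_1+a_2+a_3$, $P=a_1a_2a_3$ and $Q=a_1a_2+a_2a_3+a_3a_1$, I would clear the factor $\tfrac13$ in \eqref{updated_Beck_bound_eq} and cancel the common term $2SP$, reducing the inequality ``corrected bound $\le$ original bound'' to the purely polynomial statement
\begin{equation*}
(a_1+a_2+a_3)^2 + 8(a_1a_2+a_2a_3+a_3a_1) \;\le\; a_1a_2a_3\,(a_1+a_2+a_3).
\end{equation*}

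The heart of the proof is to establish this inequality for every admissible triple and to pinpoint exactly where it fails. Rewriting it as $g(a_3)\ge 0$, where $a_1,a_2$ are held fixed and
\begin{equation*}
g(a_3) = (a_1a_2-1)\,a_3^2 + (a_1+a_2)(a_1a_2-10)\,a_3 - (a_1+a_2)^2 - 8a_1a_2 ,
\end{equation*}
I would analyse this quadratic in the largest entry. Since the entries are pairwise coprime and (in the non-degenerate range) exceed $1$, they are distinct with $a_1a_2\ge 6$, so the leading coefficient $a_1a_2-1$ is positive, $g$ is an upward parabola, and its value at $a_3=0$ is negative. The only coprime pair with $a_1\ge 2$ and $a_1a_2<10$ is $(a_1,a_2)=(2,3)$; for every other admissible pair one has $a_1a_2\ge 10$, so the linear coefficient is nonnegative, $g$ is increasing for positive arguments, and evaluating $g$ at the least admissible value of $a_3$ confirms $g(a_3)\ge 0$ throughout that range. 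For the remaining pair $(2,3)$ one computes $g(a_3)=5a_3^2-20a_3-73$, which is nonnegative for all admissible $a_3\ge 7$ and negative only at $a_3=5$.

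It follows that the displayed polynomial inequality, and hence ``corrected $\le$ original'', holds for every admissible triple except $(a_1,a_2,a_3)=(2,3,5)$. For all non-exceptional vectors Theorem~\ref{Corrected_Beck_bound} then shows that $F(\boldsymbol a)$ is bounded above by the corrected bound, which is in turn bounded above by the original bound; this is exactly the claim. For the single exceptional case I would instead appeal to Corollary~\ref{Improved_Upper_Bound_from_Sylvester}, which yields $F(\boldsymbol a)\le (a_1-1)(a_2-1)-1 = (2-1)(3-1)-1 = 1$, whereas the original bound evaluates to $\tfrac12(\sqrt{300}-10)>1$; thus $F(\boldsymbol a)$ lies below the original bound here as well, completing the argument. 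The degenerate possibility $a_1=1$ is immediate, since then $F(\boldsymbol a)=-1$ while the original bound is nonnegative.

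I expect the main obstacle to be the delicate behaviour of $g$ near its positive root for the smallest triples: the margin by which the polynomial inequality holds is slim just past the boundary, so the evaluation of $g$ at the least admissible $a_3$ for each borderline coprime pair with small product must be carried out carefully to be certain that $(2,3,5)$ is genuinely the unique failure and that no further small exception has been overlooked. Confirming that coprimality forces $a_3$ past the larger root of $g$ in every such pair is the step demanding the most attention; the monotonicity comparison of the two bounds and the direct check of the exceptional case are then routine.
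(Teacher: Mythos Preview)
Your proposal is correct and follows the same overall strategy as the paper: both arguments first observe that $UB_2(\boldsymbol a)\ge UB_1(\boldsymbol a)$ already suffices by Theorem~\ref{Corrected_Beck_bound}, reduce the comparison to the polynomial inequality $S^2+8Q\le SP$, dispose of $a_1=1$ by noting $F(\boldsymbol a)=-1$, and treat the residual failures via Corollary~\ref{Improved_Upper_Bound_from_Sylvester}. The difference lies in how the failure set is located. The paper bounds it crudely by showing $a_1a_2\le 32$ is necessary and then, for each of the resulting coprime pairs $(a_1,a_2)$, verifies $F(\boldsymbol a)\le (a_1-1)(a_2-1)-1<UB_2(a_1,a_2,a_3^{\min})$ and invokes monotonicity of $UB_2$ in $a_3$. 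Your analysis is sharper: writing the comparison as the quadratic $g(a_3)=(a_1a_2-1)a_3^2+(a_1+a_2)(a_1a_2-10)a_3-(a_1+a_2)^2-8a_1a_2$ and exploiting that the linear coefficient is nonnegative once $a_1a_2\ge 10$, you isolate $(2,3)$ as the sole borderline pair and $(2,3,5)$ as the unique admissible triple with $a_1\ge 2$ where $UB_2<UB_1$ --- in fact recovering the $a_1\ge 2$ portion of Theorem~\ref{UB2_vs_UB1_Finite_Cases}. This buys you far less case-checking (one triple instead of several dozen pairs), at the cost of having to justify more carefully that $g(a_3^{\min})\ge 0$ uniformly over all coprime pairs with $a_1a_2\ge 10$; you flag this yourself, and a short estimate using $a_3\ge a_2+1$ together with explicit checks for the handful of pairs with $10\le a_1a_2\le 12$ closes it. One minor slip: for $a_1=1$ the original bound need not be nonnegative (e.g.\ $a_1=a_2=a_3=1$ gives $\tfrac12(\sqrt3-3)<0$), but it is always at least $-1$, which is all you need.
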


Furthermore, it is natural to consider the relative tightness of \eqref{Corrected_Beck_bound} with \eqref{Beck Original Bound}. This comparison will be explored in the subsequent section of the paper.


\section{Tightness Comparison of Upper Bounds}
In this section, we consider the relative tightness of the upper bounds \eqref{Corrected_Beck_bound} and \eqref{Beck Original Bound}. In particular, to slightly simplify notation, let us denote by 
$$
\begin{aligned}
UB_1(\boldsymbol{a}) &= UB_1 (a_1, a_2, a_3) \\
&:= \frac{1}{2} \bigg( 
    \sqrt{ 
        \frac{1}{3}\big(a_1 + a_2 + a_3\big) 
        \big(a_1 + a_2 + a_3 + 2a_1a_2a_3\big) 
        + \frac{8}{3}\big(a_1a_2 + a_2a_3 + a_3a_1\big)
    } \\
&\hspace{10cm} - a_1 - a_2 - a_3 
\bigg)
\end{aligned}
$$
and
$$
UB_2 (\boldsymbol{a}) = UB_2 (a_1, a_2, a_3) :=   \frac{1}{2}\left(\sqrt{a_1 a_2 a_3\left(a_1+a_2+a_3\right)}-a_1-a_2-a_3\right).
$$
Recall that Theorem \ref{Corrected_Beck_bound} and \ref{Original_Corrected_Beck_bound} imply that $UB_1(\boldsymbol{a})$ and $UB_2(\boldsymbol{a})$ are valid upper bounds provided $\boldsymbol{a}$ satisfies \eqref{stronger conditions on a} and $a_1 \le a_2 \le \cdots \le a_n$, however, we instead are interested in which bound is tighter. The first result of this section shows that $UB_2 (\boldsymbol{a})$ is tighter than $UB_1 (\boldsymbol{a})$ is only a relatively ``{small}'' (finite) number of cases, where the proof is excluded given this was completed via simple enumeration.

\begin{Theorem} \label{UB2_vs_UB1_Finite_Cases}
If an integer vector $\boldsymbol{a} = (a_1, a_2, \ldots, a_n)^T$ satisfies \eqref{stronger conditions on a} with $a_1 \le a_2 \le \cdots \le a_n$, then $F(\boldsymbol{a})$ satisfies $F(\boldsymbol{a}) \le UB_1 (\boldsymbol{a})$ and $F(\boldsymbol{a}) \le UB_2 (\boldsymbol{a})$, where $UB_2(\boldsymbol{a})$ is sharper only when
$$
\begin{aligned}
(a_1, a_2, a_3) \in \big\{  &(1,2,3),(1,2,5),(1,2,7),(1,2,9),(1,2,11),(1,2,13),(1,2,15), \\
    &(1,2,17), (1,2,19), (1,2,21),(1,2,23),(1,2,25),(1,3,4),(1,3,5), \\
    &(1,3,7),(1,3,8), (1,3,10), (1,3,11),(1,3,13),(1,3,14),(1,4,5), \\
    &(1,4,7),(1,4,9),(1,4,11), (1,5,6), (1,5,7),(1,5,8),(1,5,9),(1,6,7),(2,3,5) \big\} .
\end{aligned}
$$
\end{Theorem}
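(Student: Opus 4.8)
The plan is to reduce the comparison of the two bounds to a single elementary polynomial inequality, and then to confine its solution set to a bounded (hence finite, and mechanically checkable) region. Throughout I would exploit the fact that $UB_1$ and $UB_2$ are structurally identical apart from their radicands: each has the form $\tfrac12\big(\sqrt{R}-(a_1+a_2+a_3)\big)$ with $R\ge 0$. Consequently $UB_2(\boldsymbol a)\le UB_1(\boldsymbol a)$ (that is, $UB_2$ is at least as sharp) if and only if the radicand of $UB_2$ does not exceed that of $UB_1$. Writing the shorthand $S=a_1+a_2+a_3$, $P=a_1a_2a_3$ and $Q=a_1a_2+a_2a_3+a_3a_1$, the radicand of $UB_2$ is $SP$ while that of $UB_1$ equals $\tfrac13 S(S+2P)+\tfrac83 Q=\tfrac13 S^2+\tfrac23 SP+\tfrac83 Q$, so a one-line rearrangement shows the comparison is equivalent to
\[
a_1a_2a_3(a_1+a_2+a_3)\;\le\;(a_1+a_2+a_3)^2+8(a_1a_2+a_2a_3+a_3a_1),
\]
i.e. $SP\le S^2+8Q$. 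Note the boundary cases (such as $(1,2,25)$) give equality $SP=S^2+8Q$, hence $UB_1=UB_2$; these are retained in the list, so the set I must characterise is exactly $\{SP\le S^2+8Q\}$.

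Second, I would prove that this inequality has only finitely many solutions under \eqref{stronger conditions on a} with $a_1\le a_2\le a_3$. Using the ordering, one has the crude estimates $S^2+8Q\le 9a_3^2+24a_3^2=33a_3^2$ and $SP\ge a_1a_2a_3^2$; combining these forces any solution to satisfy $a_1a_2\le 33$, whence $a_1\le 5$ and $a_2\le\lfloor 33/a_1\rfloor$. For fixed $a_1,a_2$ with $a_1a_2\ge 2$, the quantity $SP-S^2-8Q$ is a quadratic in $a_3$ with \emph{positive} leading coefficient $a_1a_2-1$, so the condition $SP\le S^2+8Q$ can hold only for $a_3$ at most the larger root of this quadratic — an explicitly computable, finite range. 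This confines all solutions to a bounded box.

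Third, inside this box I would simply enumerate every triple $(a_1,a_2,a_3)$ with $a_1\le a_2\le a_3$ that is pairwise coprime and test $SP\le S^2+8Q$ directly; this recovers precisely the triples listed in the statement. I expect the genuine content to lie entirely in the first two steps — the clean radicand reduction and the boundedness estimate — while the third step is mechanical (this is the ``simple enumeration'' the authors invoke). The main obstacle will be making the boundedness quantitative and airtight, and in particular handling the degenerate repeated-generator case $a_1=a_2=1$: there $a_1a_2-1=0$, the quadratic degenerates to a line with negative slope, and $SP-S^2-8Q<0$ holds for \emph{every} $a_3$, which would produce infinitely many solutions. This case must be excluded by the standing convention that the generators are distinct (so that $a_1<a_2<a_3$ and hence $a_1a_2\ge 2$), and one must likewise be careful to include the equality/boundary triples so that the enumerated set matches the stated list exactly.
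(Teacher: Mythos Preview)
Your proposal is correct and, in fact, more detailed than what the paper offers: the paper omits the proof of this theorem entirely, stating only that it ``was completed via simple enumeration.'' Your reduction of the comparison $UB_2\le UB_1$ to the polynomial inequality $SP\le S^2+8Q$ and the subsequent bound $a_1a_2\le 33$ are exactly the same computation the paper carries out in its proof of Theorem~\ref{Original_Corrected_Beck_bound} (Section~6), where the equivalent form $a_1a_2a_3(a_1{+}a_2{+}a_3)\le a_1^2{+}a_2^2{+}a_3^2+10(a_1a_2{+}a_2a_3{+}a_3a_1)$ appears as equation~\eqref{UB_le_UB} and the bound $a_1a_2<33$ is derived by the same estimate you use. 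So your argument is not a different route but rather a faithful reconstruction of the machinery the paper already deploys elsewhere, applied here to produce the finite list.

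Your flag on the degenerate case $a_1=a_2=1$ is well taken: condition~\eqref{stronger conditions on a} as written does not exclude repeated $1$'s, and for $(1,1,a_3)$ one indeed has $UB_2<UB_1$ for every $a_3\ge 1$, so the listed set would be incomplete unless one reads the hypotheses as forcing distinct generators. The paper handles this implicitly in Section~6 by passing to the strict ordering $a_1<a_2<a_3$ (noting that otherwise $a_1=1$ and $F(\boldsymbol a)=-1$), but never states the distinctness convention explicitly for Theorem~\ref{UB2_vs_UB1_Finite_Cases}; your caveat is therefore a legitimate observation about the statement rather than a defect in your argument.
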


It should be emphasised that this result suggests that in ``{almost all}'' cases, $UB_1(\boldsymbol{a})$ provides a tighter bound than $UB_2(\boldsymbol{a})$. Note that in the above we assume the stronger conditions \eqref{stronger conditions on a} rather than the weaker conditions \eqref{conditions on a} since the examples \eqref{counter} and \eqref{counter2} show that $UB_1(\boldsymbol{a})$ and $UB_2(\boldsymbol{a})$ do not necessarily apply under only \eqref{conditions on a}. In order to compare the relative tightness of the bounds, for completeness, we apply Monte Carlo simulation techniques (see e.g. \cite[Chapter 2]{ross2012simulation}) and present the results below. 

During this simulation, we firstly randomly generate integer vectors $\boldsymbol{a}$ satisfying the conditions \eqref{stronger conditions on a} with ordering $a_1 \le a_2 \le \cdots \le a_n$, before computing the values of $UB_1(\boldsymbol{a})$ and $UB_2(\boldsymbol{a})$. This process is iteratively repeated 100,000 times. During the sampling, we set $\| \boldsymbol{a} \|_{\infty} = \max_i | a_i | \le 1000$ for convenience. Note that in each graph in Figure \ref{fig:mainfigureSimulation}, the vertical axis corresponds to the difference $UB_1(\boldsymbol{a}) - UB_2(\boldsymbol{a})$, where a large vertical value illustrates that the corrected upper bound \eqref{updated_Beck_bound_eq} is much tighter than the originally stated bound \eqref{Beck Original Bound}.

\vspace{2.0mm}

\begin{figure}[ht]
\centering
    \begin{subfigure}{0.49\linewidth}
        \includegraphics[width=\linewidth]{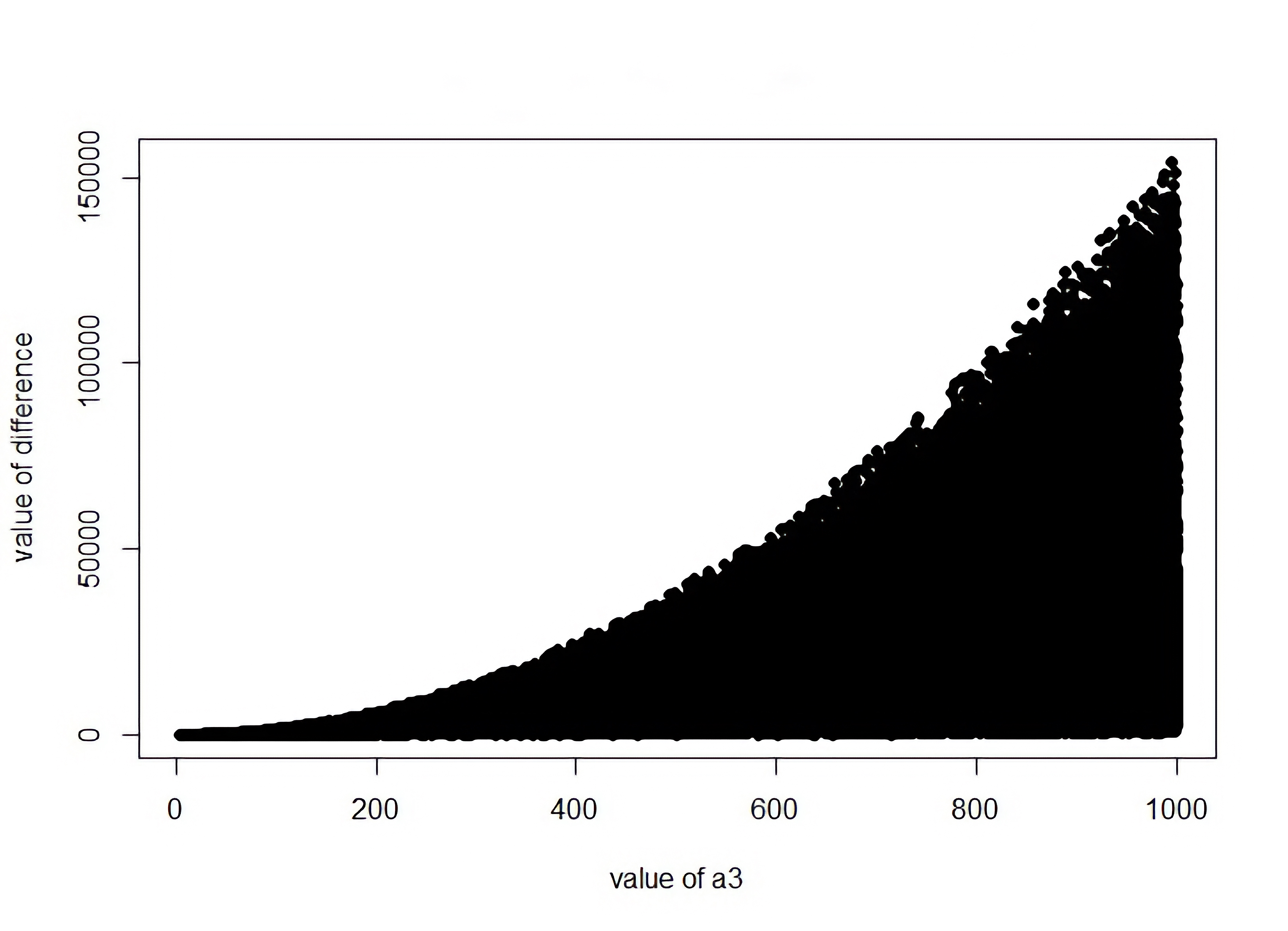}
        \caption{$UB_1(\boldsymbol{a}) - UB_2(\boldsymbol{a})$ upon increasing \\ the entry $a_3$.}
        \label{fig:Figure1}
    \end{subfigure}
    \hfill
    \begin{subfigure}{0.46\linewidth}
        \includegraphics[width=\linewidth]{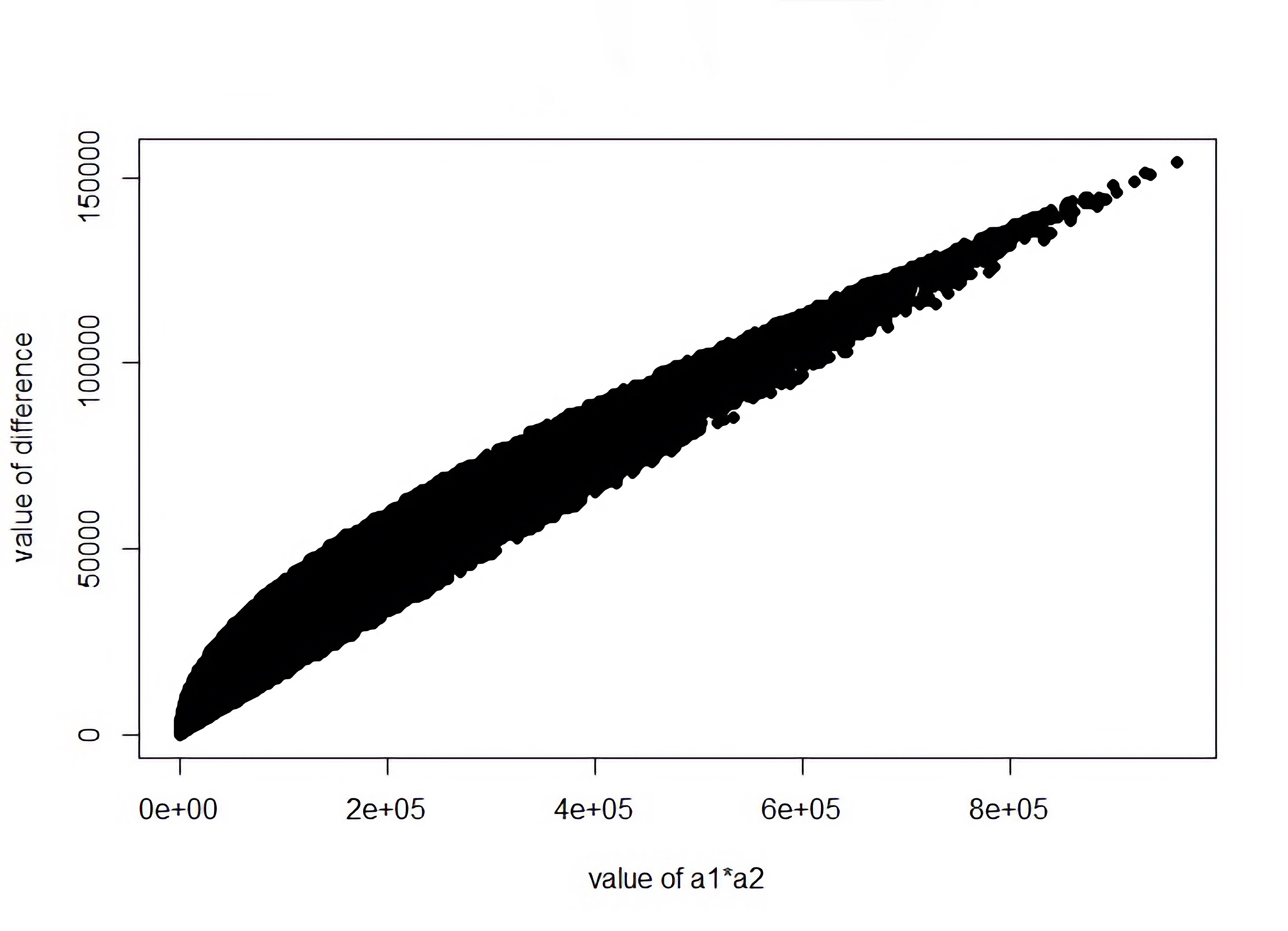}
        \caption{$UB_1(\boldsymbol{a}) - UB_2(\boldsymbol{a})$ upon increasing \\ the product $a_1a_2$.}
        \label{fig:Figure2}
    \end{subfigure}

\vspace{0.5em} 

    \begin{subfigure}{0.49\linewidth}
        \includegraphics[width=\linewidth]{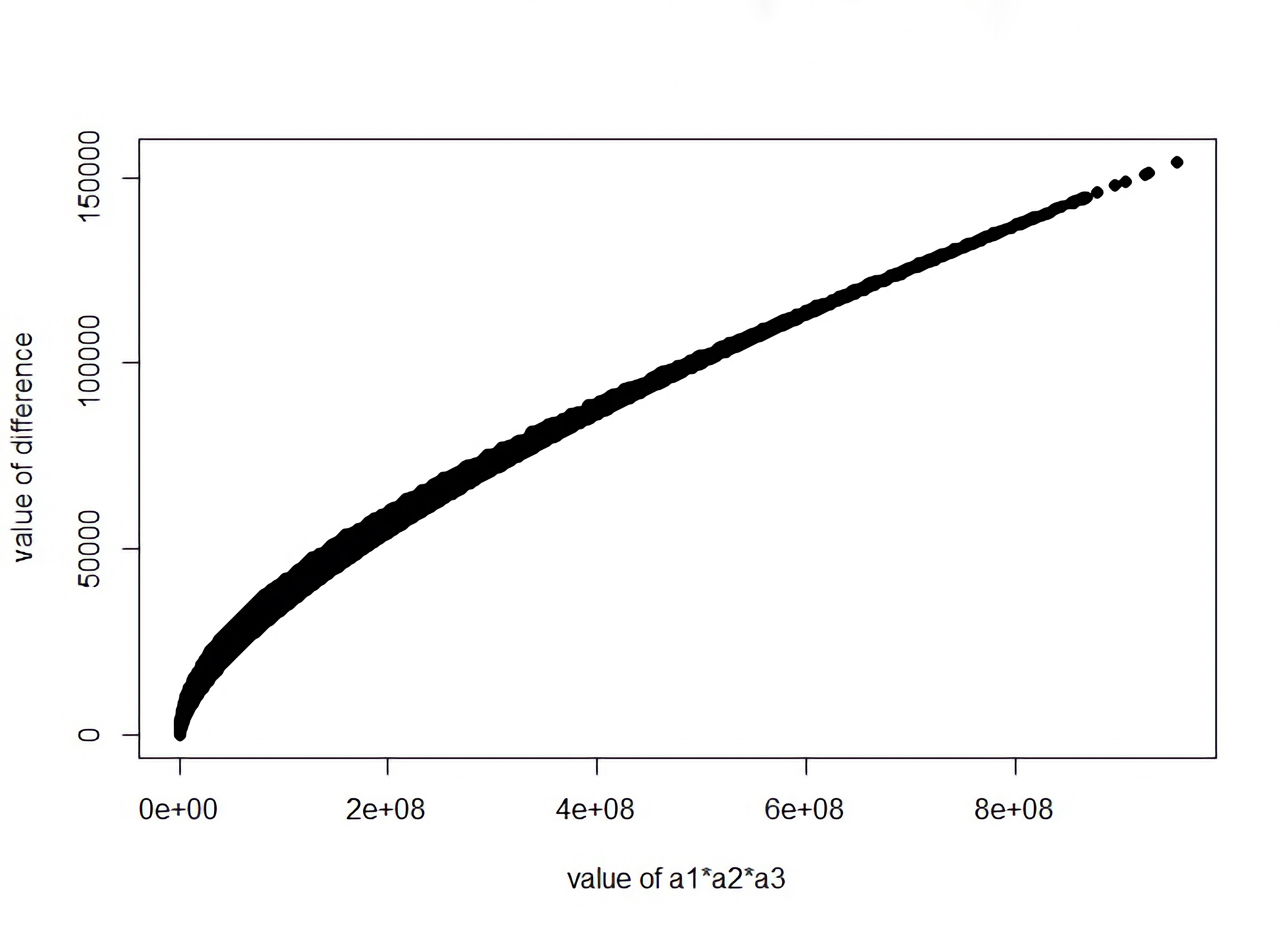}
        \caption{$UB_1(\boldsymbol{a}) - UB_2(\boldsymbol{a})$ upon increasing \\ the product $a_1 a_2 a_3$.}
        \label{fig:Figure3}
    \end{subfigure}
    \hfill
    \begin{subfigure}{0.49\linewidth}
        \includegraphics[width=\linewidth]{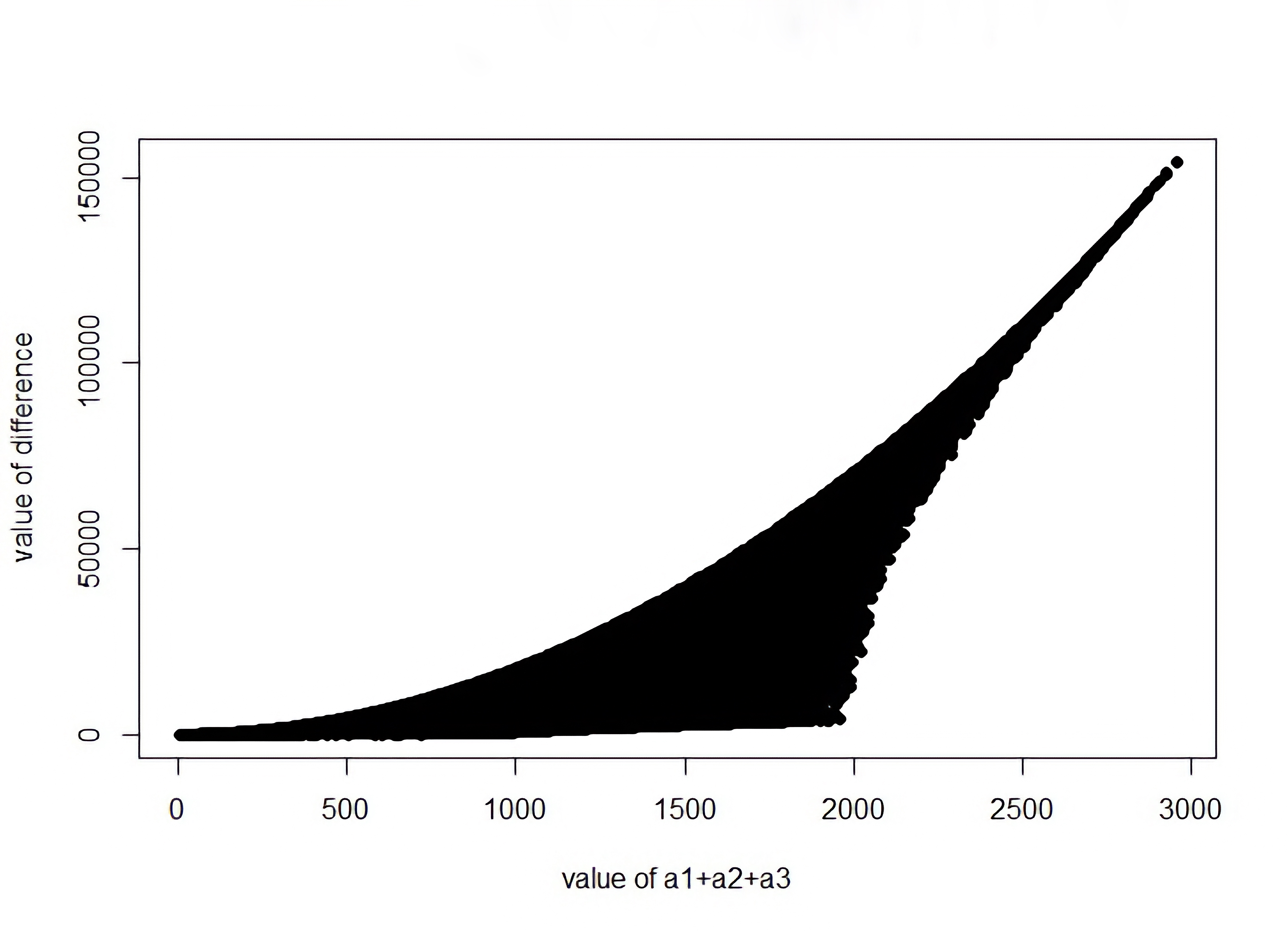}
        \caption{$UB_1(\boldsymbol{a}) - UB_2(\boldsymbol{a})$ upon increasing \\ the sum $a_1+a_2+a_3$.}
        \label{fig:Figure4}
    \end{subfigure}
    \caption{This figure illustrates how the difference $UB_1(\boldsymbol{a}) - UB_2(\boldsymbol{a})$ varies dependent upon the $a_i$'s.}
    \label{fig:mainfigureSimulation}
\end{figure}

Figure \ref{fig:Figure1} demonstrates the difference $UB_1(\boldsymbol{a}) - UB_2(\boldsymbol{a})$ grows rapidly with increases in $a_3$, while the difference remains small if $a_3$ is small. It should be emphasised that one would not expect the difference to be significant when $a_3$ is small given that the entries of $\boldsymbol{a}$ are ordered by assumption. Figure \ref{fig:Figure2} shows how this differences varies upon increases in $a_1a_2$. Notably, the figure suggests that the minimum difference increases linearly with $a_1a_2$, whereas the maximum difference seems to grow sublinearly with $a_1a_2$. Furthermore, if $a_1 a_2$ is large (around 1,000,000), then the variance of the difference $UB_1(\boldsymbol{a}) - UB_2(\boldsymbol{a})$ appears small. This can be explained by considering the difference
$$
\begin{aligned}
    UB_1(\boldsymbol{a}) - UB_2(\boldsymbol{a}) =\frac{1}{2}\Big(&\sqrt{a_1 a_2 a_3\left(a_1+a_2+a_3\right)} \\
    & -\sqrt{\frac{1}{3} a_1 a_2 a_3\left(a_1+a_2+a_3\right)+\left(a_1+a_2+a_3\right)^2+\frac{8}{3}\left(a_1 a_2+a_2 a_3+a_3 a_1\right)} \, \Big) ,
\end{aligned}
$$
which is maximised for fixed $a_1 a_2$ when $a_3$ is large and is minimised when $a_3$ is small. In particular, note that if $a_3$ is small, then the assumed ordering implies that $a_3 \approx a_2$. In such case, the value of the difference can be well approximated by 
$$
\frac{1}{2} a_1 a_2\left(\sqrt{3}-\sqrt{2+\frac{11}{a_1 a_2}} \, \right) ,
$$
which grows roughly linearly. If instead $a_1 a_2$ is large (say around 1,000,000), then the assumed ordering and upper bound on the $a_i$'s restricts variance in both $UB_1(\boldsymbol{a})$ and $UB_2(\boldsymbol{a})$, respectively. Figure \ref{fig:Figure3} demonstrates the difference $UB_1(\boldsymbol{a}) - UB_2(\boldsymbol{a})$ grows sublinearly with the product $a_1 a_2 a_3$, while the variance once more appears small, which can be similarly explained via careful algebraic analysis. Figure \ref{fig:Figure4} shows how this difference varies upon increases in $a_1 + a_2 + a_3$, where the variance decreases significantly as the value of $a_1 + a_2 + a_3$ grows beyond 2000. Notably, the final figure suggests that one should expect the difference $UB_1(\boldsymbol{a}) - UB_2(\boldsymbol{a})$ to be small only in the scenario that $a_1$ is ``{reasonably small}'', which follows in light of the assumed ordering.


\section{Proof of Theorem \ref{Corrected_Beck_bound}}
In this section, we follow closely the argument presented by Beck et al. \cite[Theorem 9]{beck2002frobenius} to demonstrate that it actually yields the upper bound  \eqref{updated_Beck_bound_eq} instead of \eqref{Beck Original Bound}. It should be noted that the proof presented by Beck et al. \cite{beck2002frobenius} instead provides an upper bound for
$$
\begin{aligned}
F^*(\boldsymbol{a}) :&= \max \left\{ b \in \mathbb{Z} : b \ne \boldsymbol{a}^T \boldsymbol{z} \text{ for all } \boldsymbol{z} \in \mathbb{Z}^n_{> 0} \right\} \\
&= F(\boldsymbol{a}) + a_1 + a_2 + \cdots + a_n, 
\end{aligned}
$$
which is the largest integer that cannot be represented as a (strictly) positive integer combination of the $a_i$'s. 

Let $A = \{ a_1, a_2, \ldots, a_n \}$ be a set of pairwise coprime positive integers, and define the function 
$$
p_A^{\prime}(b)=\#\left\{\left(m_1, \ldots, m_n\right)^T \in \mathbb{Z}^n_{>0}: \sum_{k=1}^n m_k \, a_k=b\right\}, 
$$
where $\#$ denotes the cardinality of the set. Specifically, $p_A^{\prime}(b)$ counts the number of (strictly) positive tuples $\left(m_1, \ldots, m_n\right)^T \in \mathbb{Z}^n_{>0}$ satisfying the equality $\sum_{k=1}^n m_k \, a_k=b$. Notice that $F^*(\boldsymbol{a})$ is simply the largest value for $b$ for which $p_A^{\prime}(b) = 0$.

Let $c_1, c_2, \ldots, c_n \in \mathbb{Z}$ be relatively prime to $c \in \mathbb{Z}$, and $t \in \mathbb{Z}$. We define the Fourier–Dedekind sum as 
$$
\sigma_t\left(c_1, \ldots, c_n ; c\right)=\frac{1}{c} \sum_{\lambda^c=1 \neq \lambda} \frac{\lambda^t}{\left(\lambda^{c_1}-1\right) \cdots\left(\lambda^{c_n}-1\right)}.
$$
Note that one particularly noteworthy expression (which follows by periodicity) \cite{beck2002frobenius} is
\begin{equation} \label{useful_result}
\sigma_t(a, b ; c)=\sum_{m=0}^{c-1}\left(\left(\frac{-a^{-1}(b m+t)}{c}\right)\right)\left(\left(\frac{m}{c}\right)\right)-\frac{1}{4 c} 
\end{equation}
with $a a^{-1} \equiv 1 \pmod c$ and where $((x))=x-\lfloor x\rfloor-\frac{1}{2}$ is a sawtooth function.

\begin{proof}
Firstly, note that it is easy to verify that
$$
F^*(\boldsymbol{a}) = F^*(a_1, a_2, \ldots, a_n) \le F^*(a_1, a_2, a_3) + a_3 + a_4 + \cdots + a_n.
$$
We closely follow \cite{beck2002frobenius} by focusing on the case where $n=3$ and the $a_i$'s are pairwise coprime. In order to slightly simplify notation, let $a,b,c$ denote pairwise relatively prime positive integers. Upon using the the Cauchy-Schwartz inequality we find 
$$
\begin{aligned}
\sigma_t(a, b ; c) & \geqslant-\sum_{m=0}^{c-1}\left(\left(\frac{m}{c}\right)\right)^2-\frac{1}{4 c}=\sum_{m=0}^{c-1}\left(\frac{m}{c}-\frac{1}{2}\right)^2-\frac{1}{4 c} \\
& =-\frac{1(2 c-1)(c-1) c}{c^2}+\frac{1}{c} \frac{c(c-1)}{2}-\frac{c}{4}-\frac{1}{4 c} \\
& =-\frac{c}{12}-\frac{5}{12 c} \, .
\end{aligned}
$$
It should be noted that in \cite{beck2002frobenius}, the right-hand side of the expression previously discussed differs from the one presented here.
We can now utilise the above inequality to obtain 
$$
\begin{aligned}
p_{\{a, b, c\}}^{\prime}(t) \geq & \, \frac{t^2}{2 a b c}-\frac{t}{2}\left(\frac{1}{a b}+\frac{1}{a c}+\frac{1}{b c}\right)+\frac{1}{12}\left(\frac{3}{a}+\frac{3}{b}+\frac{3}{c}+\frac{a}{b c}+\frac{b}{a c}+\frac{c}{a b}\right) \\ 
& \, \hspace{7.0mm} -\frac{1}{12}(a+b+c)-\frac{5}{12}\left(\frac{1}{a}+\frac{1}{b}+\frac{1}{c}\right) \\ 
= & \, \frac{t^2}{2 a b c}-\frac{t}{2}\left(\frac{1}{a b}+\frac{1}{a c}+\frac{1}{b c}\right)+\frac{1}{12}\left(\frac{a}{b c}+\frac{b}{a c}+\frac{c}{a b}\right) \\
& \, \hspace{7.0mm} -\frac{1}{12}(a+b+c)-\frac{1}{6}\left(\frac{1}{a}+\frac{1}{b}+\frac{1}{c}\right) \, , 
\end{aligned}
$$
which upon algebraic manipulation yields the upper bound 
$$
F^*(a,b,c) \le \frac{1}{2}(a+b+c)+\frac{1}{2} \sqrt{\frac{1}{3}(a+b+c)(a+b+c+2 a b c)+\frac{8}{3}(a b+b c+c a)}
$$
Thus, upon replacing $a,b$ and $c$ with $a_1, a_2$ and $a_3$, respectively, we deduce that 
$$
\begin{aligned}
    F^*(\boldsymbol{a}) &\le F^*(a_1, a_2, a_3) + a_3 + a_4 + \cdots + a_n \\ 
    &\le \left( \frac{1}{2}\sqrt{\frac{1}{3}\left(a_1+a_2+a_3\right)\left(a_1+a_2+a_3+2 a_1 a_2 a_3\right)+\frac{8}{3}\left(a_1 a_2+a_2 a_3+a_3 a_1\right)} + a_1 + a_2 + a_3 \right) \\ 
    & \, \hspace{20.0mm} + a_3 + a_4 + \cdots + a_n \, 
\end{aligned}
$$
which yields that 
$$
\begin{aligned}
F(\boldsymbol{a}) &= F^*(\boldsymbol{a}) - a_1 - a_2 - \cdots - a_n \\
&\le \frac{1}{2}\bigg(\sqrt{\frac{1}{3}\left(a_1+a_2+a_3\right)\left(a_1+a_2+a_3+2 a_1 a_2 a_3\right)+\frac{8}{3}\left(a_1 a_2+a_2 a_3+a_3 a_1\right)} \\ 
&\hspace{10cm}-a_1-a_2-a_3\bigg)
\end{aligned}
$$
as required, which concludes the proof.
\end{proof}


\section{Proof of Theorem \ref{Original_Corrected_Beck_bound}}
Recall the notation 
$$
\begin{aligned}
UB_1(\boldsymbol{a}) 
&:= \frac{1}{2} \bigg( 
    \sqrt{ 
        \frac{1}{3}\big(a_1 + a_2 + a_3\big) 
        \big(a_1 + a_2 + a_3 + 2a_1a_2a_3\big) 
        + \frac{8}{3}\big(a_1a_2 + a_2a_3 + a_3a_1\big)
    } \\
&\hspace{10cm} - a_1 - a_2 - a_3 
\bigg)
\end{aligned}
$$
and
$$
UB_2 (\boldsymbol{a}) :=  \frac{1}{2}\left(\sqrt{a_1 a_2 a_3\left(a_1+a_2+a_3\right)}-a_1-a_2-a_3\right).
$$

In this section, we show that $UB_2 (\boldsymbol{a})$ is a valid upper bound on the Frobenius number $F(\boldsymbol{a})$ provided $\boldsymbol{a}$ satisfies \eqref{stronger conditions on a} and $a_1 \le a_2 \le \cdots \le a_n$. It should be noted that the stronger condition \eqref{stronger conditions on a} with the (assumed) ordering of the $a_i$'s implies that $a_1 < a_2 < \cdots < a_n$ provided that $a_1 > 1$. Before discussing the correctness of the upper bound $UB_2(\boldsymbol{a})$, we firstly consider how this bound varies with changes in $a_3$.

\begin{Proposition} \label{increasing_upper_bound}
If $a_1 < a_2 < a_3$, then $UB_2(\boldsymbol{a})$ is a strictly increasing function in $a_3$. If instead $a_1 \le a_2 \le a_3$ holds, then $UB_2(\boldsymbol{a})$ is a nondecreasing function in $a_3$.
\end{Proposition}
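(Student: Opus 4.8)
The plan is to fix $a_1$ and $a_2$ and treat $UB_2(\boldsymbol{a})$ as a differentiable function $g$ of the single real variable $a_3 > 0$, namely $g(a_3) = \tfrac{1}{2}\bigl(\sqrt{a_1 a_2 a_3 (a_1 + a_2 + a_3)} - a_1 - a_2 - a_3\bigr)$, and to establish monotonicity by showing that $g'(a_3) > 0$ on the relevant domain. Writing $s := a_1 + a_2$ and $Q(a_3) := a_1 a_2 a_3(s + a_3) = a_1 a_2(a_3^2 + s\,a_3)$, the only nonconstant contributions to $g$ are $\tfrac12\sqrt{Q(a_3)}$ and $-\tfrac12 a_3$, so that $g'(a_3) = \tfrac12\bigl(Q'(a_3)/(2\sqrt{Q(a_3)}) - 1\bigr)$ with $Q'(a_3) = a_1 a_2 (2a_3 + s)$.

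The heart of the argument is to decide the sign of $g'(a_3)$. Since $a_3 > 0$ gives $Q(a_3) > 0$ and $Q'(a_3) > 0$, the inequality $g'(a_3) > 0$ is equivalent to $Q'(a_3)/2 > \sqrt{Q(a_3)}$; as both sides are positive, I would square and compare $Q'(a_3)^2/4$ against $Q(a_3)$. A direct expansion gives $\tfrac14 Q'(a_3)^2 - Q(a_3) = a_1 a_2 (a_1 a_2 - 1)\bigl(a_3^2 + s\,a_3\bigr) + \tfrac14 a_1^2 a_2^2 s^2$, so the task reduces to checking that this quantity is positive.

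Positivity then follows from the integrality and positivity of the entries: since $a_1, a_2 \ge 1$ we have $a_1 a_2 \ge 1$, so the first summand is nonnegative (as $a_3^2 + s\,a_3 > 0$) and the second summand $\tfrac14 a_1^2 a_2^2 s^2$ is strictly positive. Hence $g'(a_3) > 0$ throughout, which yields the strictly increasing conclusion under $a_1 < a_2 < a_3$; the nondecreasing statement under the weaker hypotheses $a_1 \le a_2 \le a_3$ then follows a fortiori (indeed the same computation shows it remains strict). The only delicate points are justifying the squaring step by first recording that both sides are positive, and confirming that the degenerate case $a_1 a_2 = 1$, forced by $a_1 = a_2 = 1$ under the pairwise coprimality assumption \eqref{stronger conditions on a}, is still covered: there the first summand vanishes but the second keeps the expression strictly positive. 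I expect this degenerate-case bookkeeping and the algebraic simplification of $\tfrac14 Q'(a_3)^2 - Q(a_3)$ to be the only real obstacles, both of which are routine.
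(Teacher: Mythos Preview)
Your proposal is correct and follows essentially the same strategy as the paper: fix $a_1,a_2$, differentiate $UB_2$ in $a_3$, and reduce positivity of the derivative to the squared inequality $a_1^2a_2^2(a_1+a_2+2a_3)^2 > 4a_1a_2a_3(a_1+a_2+a_3)$. The only difference is in how that inequality is verified: the paper first records $a_1a_2(a_1+a_2+2a_3) > 4a_3$ (which uses $a_1<a_2$ to force $a_1a_2\ge 2$) and then multiplies up, whereas your explicit factorization $\tfrac14 Q'(a_3)^2 - Q(a_3) = a_1a_2(a_1a_2-1)(a_3^2+sa_3) + \tfrac14 a_1^2a_2^2 s^2$ makes positivity immediate for all $a_1a_2\ge 1$ and in particular handles the degenerate case $a_1=a_2=1$ uniformly, which the paper's intermediate inequality does not directly cover.
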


\begin{proof}
Upon partial differentiation with respect to $a_3$, observe that $U B_2(\boldsymbol{a})$ becomes
$$
\frac{\partial U B_2(\boldsymbol{a})}{\partial a_3}=\frac{1}{2}\left(\frac{a_1 a_2\left(a_1+a_2+2 a_3\right)}{2 \sqrt{a_1 a_2 a_3\left(a_1+a_2+a_3\right)}}-1\right).
$$
If $a_1 < a_2 < a_3$, then note that $a_1 a_2\left(a_1+a_2+2 a_3\right)>4 a_3$ holds. Upon simple algebraic manipulation, we deduce that $a_1^2 a_2^2\left(a_1+a_2+2 a_3\right)^2>4 a_1 a_2 a_3\left(a_1+a_2+a_3\right)$, which implies that 
$$
\frac{a_1 a_2\left(a_1+a_2+2 a_3\right)}{2 \sqrt{a_1 a_2 a_3\left(a_1+a_2+a_3\right)}}>1 .
$$
It follows that $\frac{\partial U B_2(\boldsymbol{a})}{\partial a_3} > 0$ and hence $UB_2(\boldsymbol{a})$ is a strictly increasing function in $a_3$ when $a_1 < a_2 < a_3$. If instead $a_1 \le a_2 \le a_3$, then a similar argument yields that $\frac{\partial U B_2(\boldsymbol{a})}{\partial a_3} \ge 0$ and hence $UB_2(\boldsymbol{a})$ is a nondecreasing function in $a_3$ as required. 
\end{proof}

We now proceed to present a detailed proof to establish the validity of Theorem \ref{Original_Corrected_Beck_bound}. 

\begin{proof}
Observe that if $UB_2 (\boldsymbol{a}) \ge UB_1 (\boldsymbol{a})$, then clearly the upper bound $UB_2 (\boldsymbol{a})$ is valid in consequence to the validity of Theorem \ref{Corrected_Beck_bound}. Thus, we consider only the setting where $UB_2 (\boldsymbol{a}) < UB_1 (\boldsymbol{a})$. Notice that upon simple algebraic manipulation the inequality $UB_2 (\boldsymbol{a}) \le UB_1 (\boldsymbol{a})$ is equivalent to 
\begin{equation} \label{UB_le_UB}
a_1 a_2 a_3\left(a_1+a_2+a_3\right) \leq a_1^2+a_2^2+a_3^2+10\left(a_1 a_2+a_2 a_3+a_1 a_3\right). 
\end{equation}

It is sufficient to here consider only the case that $a_1 < a_2 < a_3$. In particular, this follows because otherwise we require $a_1 = 1$ in light of the assumed conditions \eqref{stronger conditions on a} and hence, in such case, we yield that $F(\boldsymbol{a}) = -1 \le UB_2 (\boldsymbol{a})$ holds as required. 

In order to satisfy \eqref{UB_le_UB}, the (strict) inequality $a_1 a_2 < 33$ is necessary. Indeed, if instead $a_1 a_2 \ge 33$, then 
$$
\begin{aligned}
a_1 a_2 a_3\left(a_1+a_2+a_3\right) & =a_1 a_2 a_3^2+a_1 a_2 a_3\left(a_1+a_2\right) \\
& >a_1 a_2 a_3^2 \\
& \geq 33 a_3^2 \\
& = 3 a_3^2 + 10(a_3^2 + a_3^2 + a_3^2) \\
& >a_1^2+a_2^2+a_3^2+10\left(a_1 a_2+a_2 a_3+a_1 a_3\right)
\end{aligned}
$$
holds, where the final inequality follows since $a_1,a_2 < a_3$. Thus, we need only to consider the cases that $a_1 a_2 \le 32$ and $a_1 < a_2$ with $\gcd(a_1, a_2) = 1$. The pairs $(a_1, a_2)$ satisfying these conditions are:
\begin{enumerate}
    \item if $a_1 = 1$, then $\left(a_1, a_2\right)=(1,2),(1,3), \ldots,(1,31),(1,32)$, 
    \item if $a_1 = 2$, then $\left(a_1, a_2\right)=(2,3),(2,5),(2,7),(2,9),(2,11),(2,13),(2,15)$, 
    \item if $a_1 = 3$, then $\left(a_1, a_2\right)=(3,4),(3,5),(3,7),(3,8),(3,10)$, 
    \item if $a_1 = 4$, then $\left(a_1, a_2\right)=(4,5),(4,7)$, and
    \item if $a_1 = 5$, then $\left(a_1, a_2\right)=(5,6)$.
\end{enumerate}
It should be emphasised that if $UB_2 (\boldsymbol{a})$ is a valid upper bound in each of the above cases, then it follows that $UB_2 (\boldsymbol{a})$ is a valid upper bound as required. In order to complete the proof we now consider each of these cases in turn. It should be noted that cases (ii) - (v) use the properties $F(\boldsymbol{a}) \le (a_1 - 1)(a_2 - 1) - 1$ (which follows by Corollary \ref{Improved_Upper_Bound_from_Sylvester}) and $UB(a_1, a_2, a_3') > UB(a_1, a_2, a_3)$ when $a_3' > a_3$ (which follows by Proposition \ref{increasing_upper_bound}).

\begin{enumerate}
    \item $a_1 = 1$: In this case, notice that since the entries of the vector $\boldsymbol{a}$ are coprime by assumption, it follows that we have $a_2 \ge 2$ and $a_3 \ge 3$. Note that we have $\left(a_2-1\right)\left(a_3-1\right) \geq 2$ and $a_2 a_3 \geq 1+a_2+a_3$. These inequalities imply that
    $$
    a_2 a_3\left(1+a_2+a_3\right) \geq\left(1+a_2+a_3\right)^2
    $$
    which, upon rearranging algebraically, yields that 
    $$
    \frac{1}{2}\left(\sqrt{a_2 a_3\left(1+a_2+a_3\right)}-\left(1+a_2+a_3\right)\right) = UB_2 (1, a_2, a_3, \ldots, a_n) \geq 0. 
    $$
    Finally, observe that the equality $F(1, a_2, a_3, \ldots, a_n) = -1$ holds for all $a_2, a_3, \ldots, a_n$ and, thus, $UB_2(\boldsymbol{a})$ is a valid upper bound in this scenario.
    
    \item $a_1 = 2$:
    In this case, notice that 
    $$
    F(2,3,a_3,a_4,\ldots, a_n) \le (2-1) (3-1) - 1 = 1 < 3.660254 = UB_2(2,3,5) \le UB_2(2,3,5), 
    $$
    where the strict inequality follows since if $(a_1, a_2) = (2,3)$, then $a_3 \ge 5$ by the conditions \eqref{stronger conditions on a}. In a similar fashion, notice that 
    $$
    F(2,5,a_3,a_4,\ldots, a_n) \le (2-1)(5-1) -1 = 3 < 8.652476 = UB_2 (2,5,7) \le UB_2 (2,5,a_3),
    $$
    $$
    F(2,7,a_3,a_4,\ldots, a_n) \le (2-1)(7-1) -1 = 5 < 14.811762 = UB_2 (2,7,9) \le UB_2 (2,7,a_3),
    $$
    $$
    F(2,9,a_3,a_4,\ldots, a_n) \le (2-1)(9-1) -1 = 7 < 22 = UB_2 (2,9,11) \le UB_2 (2,9,a_3),
    $$
    $$
    \begin{aligned}
    F(2,11,a_3,a_4,\ldots, a_n) \le (2-1)(11-1) -1 &= 9 < 30.116122 \\ &= UB_2 (2,11,13) \le UB_2 (2,11,a_3),  
    \end{aligned}
    $$
    $$
    \begin{aligned}
    F(2,13,a_3,a_4,\ldots, a_n) \le (2-1)(13-1) -1 &= 11 < 39.083269 \\ &= UB_2 (2,13,15) \le UB_2 (2,13,a_3),
    \end{aligned}
    $$
    $$
    \begin{aligned}
    F(2,15,a_3,a_4,\ldots, a_n) \le (2-1)(15-1) -1 &= 13 < 48.8407169 \\ &= UB_2 (2,15,17) \le UB_2 (2,15,a3).
    \end{aligned}
    $$

    \item $a_1 = 3$:
    In this case, notice that 
    $$
    F(3,4,a_3,a_4,\ldots, a_n) \le (3-1)(4-1) -1 = 5 < 7.416408 = UB_2 (3,4,5) \le UB_2 (3,4,a_3), 
    $$
    $$
    F(3,5,a_3,a_4,\ldots, a_n) \le (3-1)(5-1) -1 = 9 < 12.343135 = UB_2 (3,5,7) \le UB_2 (3,5,a_3), 
    $$
    $$
    F(3,7,a_3,a_4,\ldots, a_n) \le (3-1)(7-1) -1 = 11 < 18.495454 = UB_2 (3,7,8) \le UB_2 (3,7,a_3), 
    $$
    $$
    F(3,8,a_3,a_4,\ldots, a_n) \le (3-1)(8-1) -1 = 13 < 27.105118 = UB_2 (3,8,11) \le UB_2 (3,8,a_3),
    $$
    $$
    \begin{aligned}
    F(3,10,a_3,a_4,\ldots, a_n) \le (3-1)(10-1) -1 &= 17 < 32.497191 \\ &= UB_2 (3,10,11) \le UB_2 (3,10,a_3) .
    \end{aligned}
    $$
    
    \item $a_1 = 4$:
    In this case, notice that 
    $$
    F(4,5,a_3,a_4,\ldots, a_n) \le (4-1)(5-1) -1 = 11 < 15.6643191 = UB_2 (4,5,7) \le UB_2 (4,5,a_3), 
    $$
    $$
    F(4,7,a_3,a_4,\ldots, a_n) \le (4-1)(7-1) -1 = 17 < 25.496479 = UB_2 (4,7,9) \le UB_2 (4,7,a_3).
    $$
    
    \item $a_1 = 5$:
    In this case, notice that 
    $$
    F(5,6,a_3,a_4,\ldots, a_n) \le (5-1)(6-1) -1 = 19 < 21.740852 = UB_2 (5,6,7) \le UB_2 (5,6,a_3).
    $$
\end{enumerate}
Thus, provided that $a_1 a_2 \le 32$, then $F(\boldsymbol{a}) < UB_2 (a_1, a_2, a_3) = UB_2 (\boldsymbol{a})$. Thus, it follows that $UB_2 (\boldsymbol{a})$ is indeed a valid upper bound in all cases, which concludes the proof.
\end{proof}

\section{Conclusions and Future Work}
In this paper, we revisited the classical Frobenius problem and examined a previously established upper bound on the Frobenius number. Our analysis revealed a subtle error in the original argument of Beck et al. \cite[Theorem 9]{beck2002frobenius}, leading to a revised and corrected upper bound. While this error did not invalidate the bound itself, it impacted its tightness. We also compared the relative tightness of the original and corrected bound through theoretical analysis and Monte Carlo simulations, demonstrating that the corrected bound is tighter in all but a relatively ``{small}'' (finite) number of cases.

This study opens several rather promising avenues for future research. Firstly, a case-specific analysis could explore the behavior and tightness of upper bounds under different assumptions around the distribution of the  input vector $\boldsymbol{a}$, such as uniform or exponential distributions. This would provide deeper insights into how the nature of the input impacts upon the bounds’ performance. 
Secondly, exploring applications of these bounds in optimization problems, particularly in knapsack or subset-sum problems, would be valuable. Understanding particularly how these bounds influence computational efficiency and solution ``quality'' in real-world settings could significantly broaden their utility. Finally, further examination of the geometric properties of the knapsack polytope $P(\boldsymbol{a},b)$ associated with the Frobenius problem may uncover deeper connections between geometric insights and the derivation of (perhaps) sharper upper bounds, particularly in higher-dimensional scenarios or under additional assumptions.


\begin{thebibliography}{999}
\bibitem{alfonsin2005diophantine}
Jorge L~Ram{\'\i}rez Alfons{\'\i}n.
\newblock {\em The diophantine Frobenius problem}.
\newblock OUP Oxford, 2005.

\bibitem{aliev2007frobenius}
Iskander M. Aliev and Peter M. Gruber.
\newblock An optimal lower bound for the Frobenius problem.
\newblock {\em Journal of Number Theory}, 123(1):71--79, 2007.

\bibitem{beck2002frobenius}
Matthias. Beck, Ricardo. Diaz, and Sinai. Robins.
\newblock The frobenius problem, rational polytopes, and fourier--dedekind sums.
\newblock {\em Journal of number theory}, 96(1):1--21, 2002.

\bibitem{beihoffer2005frobenius}
Derek Beihoffer, Jem Hendry, Albert Nijenhuis, and Stan Wagon.
\newblock Faster algorithms for Frobenius numbers.
\newblock {\em Electronic Journal of Combinatorics}, 12(1), 2005.

\bibitem{bocker2005money}
Sebastian B{\"o}cker and Zsuzsanna Lipt{\'a}k.
\newblock The money changing problem revisited: computing the frobenius number in time $o(ka 1)$.
\newblock In {\em International Computing and Combinatorics Conference}, pages 965--974. Springer, 2005.

\bibitem{boju2007math}
Valentin Boju and Louis Funar.
\newblock {\em The math problems notebook}.
\newblock Springer Science \& Business Media, 2007.

\bibitem{brauer1962frobenius}
Alfred Brauer and John E. Shockley.
\newblock On a problem of Frobenius.
\newblock {\em Journal für die Reine und Angewandte Mathematik}, 211:215--220, 1962.

\bibitem{cormen2009introduction}
Thomas H. Cormen, Charles E. Leiserson, Ronald L. Rivest, and Clifford Stein, \emph{Introduction to Algorithms}. MIT Press, 2009.

\bibitem{curtis1990formulas}
Frank Curtis.
\newblock On formulas for the frobenius number of a numerical semigroup.
\newblock {\em Mathematica Scandinavica}, 67(2):190--192, 1990.

\bibitem{davison1994linear}
James L. Davison.
\newblock On the linear diophantine problem of Frobenius.
\newblock {\em Journal of Number Theory}, 48(3):353--363, 1994.

\bibitem{erdos1972linear}
Paul Erd{\H o}s and Ronald Graham.
\newblock On a linear diophantine problem of frobenius.
\newblock {\em Acta Arithmetica}, 21(1):399--408, 1972.

\bibitem{fukshansky2007frobenius}
Lenny Fukshansky and Sinai Robins.
\newblock Frobenius problem and the covering radius of a lattice.
\newblock {\em Discrete \& Computational Geometry}, 37(3):471--483, 2007.

\bibitem{greenberg1980frobenius}
Harvey Greenberg.
\newblock An algorithm for a linear Diophantine equation and a problem of Frobenius.
\newblock {\em Numerische Mathematik}, 34(4):349--352, 1980.

\bibitem{johnson1960diophantine}
Selmer M. Johnson.
\newblock A linear Diophantine problem.
\newblock {\em Canadian Journal of Mathematics}, 12:390--398, 1960.

\bibitem{kannan1992lattice}
Ravi Kannan.
\newblock Lattice translates of a polytope and the frobenius problem.
\newblock {\em Combinatorica}, 12(2):161--177, 1992.

\bibitem{killingbergtro2000frobenius}
Hans Georg Killingbergtrø.
\newblock Betjening av figur i Frobenius' problem.
\newblock {\em Normat-Nordisk Matematisk Tidsskrift}, 48(2):75--82, 2000.

\bibitem{komatsu2022frobenius}
Takao Komatsu.
\newblock Sylvester power and weighted sums on the Frobenius set in arithmetic progression.
\newblock {\em Discrete Applied Mathematics}, 315:110--126, 2022.

\bibitem{komatsu2023frobenius}
Takao Komatsu.
\newblock Sylvester sum on the Frobenius set in arithmetic progression with initial gaps.
\newblock {\em Advances in Number Theory and Applied Analysis}, 2023.

\bibitem{liu2024frobenius}
Feihu Liu, Guoce Xin, Suting Ye, and Jingjing Yin.
\newblock The Frobenius formula for $A = \{a, ha + d, ha + b^2d, \dots, ha + b^kd\}$.
\newblock {\em The Ramanujan Journal}, 64(2):489--504, 2024.

\bibitem{morales2024frobenius}
Marcel Morales and Nguyen Thi Dung.
\newblock A “pseudo-polynomial” algorithm for the Frobenius number and Gröbner basis.
\newblock {\em Journal of Symbolic Computation}, 120:102233, 2024.

\bibitem{nijenhuis1979money}
Albert Nijenhuis.
\newblock A minimal-path algorithm for the “money changing problem”.
\newblock {\em The American Mathematical Monthly}, 86(10):832--835, 1979.

\bibitem{ramirez1996complexity}
Jorge~L Ramirez-Alfonsin.
\newblock Complexity of the frobenius problem.
\newblock {\em Combinatorica}, 16(1):143--147, 1996.

\bibitem{roberts1957diophantine}
Joseph Roberts.
\newblock On a Diophantine problem.
\newblock {\em Canadian Journal of Mathematics}, 9:219--222, 1957.

\bibitem{roblesperez2018frobenius}
Antonio M. Robles-Pérez and José C. Rosales.
\newblock The Frobenius number for sequences of triangular and tetrahedral numbers.
\newblock {\em Journal of Number Theory}, 186:473--492, 2018.

\bibitem{rodseth1978linear}
{\"O}ystein~J R{\"o}dseth.
\newblock On a linear diophantine problem of frobenius.
\newblock {\em Journal f{\" u}r die reine und angewandte Mathematik}, 1978.

\bibitem{ross2012simulation}
Sheldon~M Ross.
\newblock {\em Simulation}.
\newblock Academic Press, 5 edition, 2013.

\bibitem{selmer1977linear}
Ernst~S Selmer.
\newblock On the linear diophantine problem of frobenius.
\newblock {\em Journal f{\" u}r die reine und angewandte Mathematik}, 1977(293-294):1--17, 1977.

\bibitem{selmer1978frobenius}
Ernst S. Selmer and Odd Beyer.
\newblock On the linear Diophantine problem of Frobenius in three variables.
\newblock {\em Journal für die Reine und Angewandte Mathematik}, 301:161--170, 1978.

\bibitem{spivey2007quadratic}
Michael~Z Spivey.
\newblock Quadratic residues and the frobenius coin problem.
\newblock {\em Mathematics Magazine}, 80(1):64--67, 2007.

\bibitem{sylvester1884problem}
James~J Sylvester.
\newblock Problem 7382.
\newblock {\em Educational Times}, 37:26, 1884.

\bibitem{tripathi2003variation}
Amitabha Tripathi.
\newblock On a variation of the coin exchange problem for arithmetic progressions.
\newblock {\em Integers: Electronic Journal of Combinatorial Number Theory}, 3(5), 2003.

\bibitem{tripathi2016frobenius}
Amitabha Tripathi.
\newblock On the Frobenius problem for $a, ha + d, ha + bd, ha + b^2d, \dots, ha + b^kd$.
\newblock {\em Journal of Number Theory}, 162:212--223, 2016.

\bibitem{tripathi2017frobenius}
Amitabha Tripathi.
\newblock Formulae for the Frobenius number in three variables.
\newblock {\em Journal of Number Theory}, 170:368--389, 2017.

\bibitem{vitek1975bounds}
Yehoshua Vitek.
\newblock Bounds for a linear diophantine problem of frobenius.
\newblock {\em Journal of the London Mathematical Society}, 2(1):79--85, 1975.

\bibitem{wilf1978circle}
Herbert~S Wilf.
\newblock A circle-of-lights algorithm for the money-changing problem.
\newblock {\em The American Mathematical Monthly}, 85(7):562--565, 1978.

\end{thebibliography}
\end{document}